\documentclass[11pt]{amsart}
\addtolength{\oddsidemargin}{-.5in}
\addtolength{\evensidemargin}{-.5in}
\addtolength{\textwidth}{1.0in} 

\theoremstyle{plain}
\newtheorem{thm}{Theorem}[section]
\newtheorem{theorem}[thm]{Theorem}

\newtheorem{lemma}[thm]{Lemma}
\newtheorem{corollary}[thm]{Corollary}
\newtheorem{proposition}[thm]{Proposition}
\theoremstyle{definition}
\newtheorem{remark}[thm]{Remark}

\newtheorem{question}[thm]{Question}

\numberwithin{equation}{section}




\title [Free auromorphisms of positive entropy]{Free automorphisms of positive entropy on smooth K\"ahler surfaces}

\author{Keiji Oguiso}

\address{Keiji Oguiso, Department of Mathematics, Osaka University\\
Toyonaka 560-0043 Osaka, Japan and  Korea Institute for Advanced Study, Hoegiro 87, Seoul, 130-722, Korea} \email{oguiso@math.sci.osaka-u.ac.jp}

\subjclass[2010]{14J28, 14J50}

\thanks{supported by JSPS Gran-in-Aid (B) No 22340009, JSPS Grant-in-Aid (S), No 22224001, and by KIAS Scholar Program}

\begin{document}

\maketitle

\begin{abstract}
We prove that there is a projective K3 surface admitting a (fixed point) free automorphism of positive entropy and that no smooth compact K\"ahler surface other than projective K3 surfaces and their blow up admits such an automorphism. 
\end{abstract}

\section{Introduction}

Throughout this note, we work over the complex number field ${\mathbf C}$.

Let $M$ be a smooth compact K\"ahler manifold and $g \in {\rm Aut}\, (M)$ 
be a biholomorphic automorphism of $M$. The maximum $d_1(g)$ 
of absolute values of 
eigenvalues of the ${\mathbf C}$-linear extension of $g^* \vert H^2(M, {\mathbf Z})$ is called the {\it first dynamical degree} of $g$. By the fundamental result of Gromov-Yomdin (\cite{Gr}, \cite{Yo}, see also \cite{Fr}, Theorem (2.1) 
for explicit statement) and Dinh-Sibony 
(\cite{DS}, Corollary (2.2)), $g$ is of {\it positive entropy} (resp. null entropy) if and only if $d_1(g) > 1$ 
(resp. $d_1(g) = 1$). We note that if $g$ is not of null entropy, then $g$ 
is of positive entropy by ${\rm det}\, g^* \vert H^2(M, {\mathbf Z}) = \pm 1$. 
We also note that $\dim\, M \ge 2$ and $g$ is of infinite order if $g$ is of positive entropy. Let
$$M^g := \{x \in M \vert g(x) = x\}\,\, .$$
$g$ is said to be {\it free} if $M^g = \emptyset$. For instance, a non-trivial translation on a complex torus is free but it is of {\it null} entropy.

Both $M^g$ and $d_1(g)$ are the most basic invariants in studying automorphisms of compact K\"ahler manifolds from complex dynamical point of view. For instance, McMullen (\cite{Mc}) has found very impressive K3 surface automorphisms with Siegel disk. Both fixed point set giving candidates of the center of the Siegel disk and the first dynamical degree played crucial roles in his study.

Concerning these two invariants, Professor Shu Kawaguchi asked me the following:

\begin{question}\label{kawaguchi}
Are there smooth projective surfaces with free automorphisms of positive entropy?
\end{question}

The aim of this short note is to give the following answer to the question:

\begin{theorem}\label{main}

(1) Let $S$ be a smooth compact K\"ahler surface with a free automorphism of positive entropy. Then $S$ is birational to a 
projective K3 surface of Picard number $\ge 2$. 

(2) Conversely, there is a projective K3 surface of Picard number $2$ with a 
free automorphism of positive entropy. 

\end{theorem}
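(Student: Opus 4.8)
\emph{Part (1): reduction to a K3 surface.} The plan is to combine the classification of surfaces carrying a positive-entropy automorphism with the holomorphic Lefschetz fixed point formula. Since $d_1(g)>1$, the Gromov--Yomdin theorem together with the Enriques--Kodaira classification (Cantat, Diller--Favre) gives $\kappa(S)\le 0$ and shows that $S$ is bimeromorphic to ${\mathbf P}^2$, a complex $2$-torus, a K3 surface, or an Enriques surface. A free automorphism has no fixed points, so the Atiyah--Bott formula gives $\chi(\sO_S,g):=\sum_q(-1)^q\,{\rm tr}\bigl(g^*\mid H^q(S,\sO_S)\bigr)=0$. If $S$ is rational or bimeromorphic to an Enriques surface then $q(S)=p_g(S)=0$, so $\chi(\sO_S,g)=1\ne 0$ --- impossible. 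If $S$ is bimeromorphic to a $2$-torus $T$, then $g$ descends to $g_T\in{\rm Aut}(T)$ with $d_1(g_T)=d_1(g)>1$, and the linear part $A$ of a lift $z\mapsto Az+b$ of $g_T$ cannot have $1$ as an eigenvalue (otherwise $\det A=\pm1$ forces all eigenvalues of $A$ onto the unit circle and $g_T$ has null entropy); hence $\chi(\sO_T,g_T)=(1-\bar a_1)(1-\bar a_2)\ne 0$, where $a_1,a_2$ are the eigenvalues of $A$, and $\chi(\sO_S,g)=\chi(\sO_T,g_T)\ne 0$ --- again impossible. So $S$ is bimeromorphic to a K3 surface $X$; as $X$ is the unique minimal model, $g$ descends to $g_X\in{\rm Aut}(X)$ with $d_1(g_X)>1$, and $g_X$ is again free, since the fibre of $S\to X$ over a fixed point of $g_X$ is a $g$-invariant point or tree of rational curves on which $g$ then has a fixed point.

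\emph{Part (1): the K3 case.} Now I would analyse the free positive-entropy automorphism $g_X$ of the K3 surface $X$. From $\chi(\sO_X,g_X)=0$, $q(X)=0$ and $H^2(X,\sO_X)\cong\overline{H^0(X,\Omega^2_X)}$ one gets $1+\bar\delta=0$, where $g_X^*\omega_X=\delta\,\omega_X$; hence $\delta=-1$, i.e. $g_X^*\omega_X=-\omega_X$. Then $g_X^2$ is symplectic and of infinite order ($d_1(g_X^2)=d_1(g_X)^2>1$), so by Nikulin's theorem $g_X^{2*}$ acts trivially on the transcendental lattice $T(X)$. Since $g_X^{2*}$ has an eigenvalue $>1$ yet acts trivially on $T(X)$, that eigenvalue occurs on ${\rm NS}(X)$, so ${\rm NS}(X)$ admits an isometry of spectral radius $>1$. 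Being primitive in the nondegenerate lattice $H^2(X,{\mathbf Z})$, ${\rm NS}(X)$ is nondegenerate; it cannot be definite (a definite lattice has finite isometry group), and it sits in $H^{1,1}(X)_{\mathbf R}$ of signature $(1,19)$, so its signature is $(1,\rho(X)-1)$ with $\rho(X)\ge 2$. Hence $X$, and therefore $S$, is projective with Picard number $\ge 2$.

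\emph{Part (2): the construction.} For the converse I would use a Torelli-type construction tuned so that the Lefschetz formulas force freeness. Take the even rank-$2$ lattice $L$ with Gram matrix $\bigl(\begin{smallmatrix}4&6\\6&4\end{smallmatrix}\bigr)$: it has signature $(1,1)$; its quadratic form has values in $4{\mathbf Z}$, so $L$ has no $(-2)$-vectors; its discriminant group is ${\mathbf Z}/2\oplus{\mathbf Z}/10$; and $\phi:=\bigl(\begin{smallmatrix}0&-1\\1&3\end{smallmatrix}\bigr)^{3}\in O(L)$ has characteristic polynomial $t^2-18t+1$, hence eigenvalues $9\pm 4\sqrt 5=\bigl(\tfrac{1+\sqrt5}{2}\bigr)^{\pm 6}$, spectral radius $>1$, infinite order, preserves each half of the positive cone, and --- this needs a direct check --- acts as $-{\rm id}$ on the discriminant group. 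By Nikulin's criterion $L$ embeds primitively into $\Lambda_{\rm K3}=U^{\oplus3}\oplus E_8(-1)^{\oplus2}$; letting $T$ be the orthogonal complement (signature $(2,18)$), the fact that $\phi$ and $-{\rm id}_T$ both act as $-{\rm id}$ on the identified discriminant groups lets $\phi\oplus(-{\rm id}_T)$ glue to an isometry $\Phi$ of $\Lambda_{\rm K3}$. Choosing $[\omega]$ in the period domain of $T$ very general (so that ${\rm NS}=L$ and the Hodge structure $T$ has endomorphism algebra ${\mathbf Q}$) and invoking surjectivity of the period map, we obtain a K3 surface $X$ with a marking identifying $H^2(X,{\mathbf Z})\cong\Lambda_{\rm K3}$, $H^0(\Omega^2_X)={\mathbf C}\omega$, ${\rm NS}(X)=L$ (so $X$ is projective with $\rho(X)=2$) and $T(X)=T$; then $\Phi$ is a Hodge isometry, and since $L$ has no $(-2)$-vectors the Kähler cone of $X$ is a full half of the positive cone of $H^{1,1}(X)_{\mathbf R}$, which $\Phi$ preserves because $\phi$ preserves each half of the positive cone of $L_{\mathbf R}$; by the global Torelli theorem there is a unique $g\in{\rm Aut}(X)$ with $g^*=\Phi$, and $d_1(g)=9+4\sqrt5>1$.

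\emph{Part (2): freeness, and the main obstacle.} To conclude I would show $g$ is free. Since $\phi=g^*\mid{\rm NS}(X)$ has no nonzero fixed vector, $g$ leaves no curve pointwise fixed, so ${\rm Fix}(g)$ is a finite set of points. The topological Lefschetz number is $L(g)=2+{\rm tr}\bigl(g^*\mid H^2(X,{\mathbf Z})\bigr)=2+{\rm tr}(\phi)+{\rm tr}(-{\rm id}_T)=2+18-20=0$; as every isolated fixed point of a holomorphic self-map contributes a strictly positive integer to $L(g)$ and there are no fixed curves, this forces ${\rm Fix}(g)=\emptyset$. (Consistently, $g^*\omega=-\omega$ makes the holomorphic Lefschetz number $\chi(\sO_X,g)=1+\overline{(-1)}$ vanish automatically.) I expect the genuine difficulty to be exactly the lattice bookkeeping: one must exhibit a \emph{single} even rank-$2$ lattice that simultaneously has signature $(1,1)$, has no $(-2)$-vectors (so $\Phi$ is effective and $g$ has no invariant curve), and carries an infinite-order isometry of trace exactly $18$ acting as $-{\rm id}$ on the discriminant group (so $\Phi$ exists by gluing with $-{\rm id}_T$ and so that $L(g)=0$); verifying that $\bigl(\begin{smallmatrix}4&6\\6&4\end{smallmatrix}\bigr)$ and $\bigl(\begin{smallmatrix}0&-1\\1&3\end{smallmatrix}\bigr)^{3}$ do all of this at once is the technical heart of the argument.
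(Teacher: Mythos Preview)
Your proposal is correct and follows essentially the same route as the paper. Part (1) is the paper's argument almost verbatim (Cantat's classification, holomorphic Lefschetz to kill the rational/Enriques and torus cases, then $g^*\omega_X=-\omega_X$ on the K3 minimal model). Part (2) is also the same construction: your lattice $\bigl(\begin{smallmatrix}4&6\\6&4\end{smallmatrix}\bigr)$ is the paper's $\bigl(\begin{smallmatrix}4&2\\2&-4\end{smallmatrix}\bigr)$ after the change of basis $e_1\mapsto e_1$, $e_2\mapsto e_1+e_2$, and your isometry $\bigl(\begin{smallmatrix}0&-1\\1&3\end{smallmatrix}\bigr)^{3}=\bigl(\begin{smallmatrix}-3&-8\\8&21\end{smallmatrix}\bigr)$ is exactly the paper's multiplication-by-$\eta^6$ written in that basis; the Torelli/gluing/Lefschetz-count argument for freeness is then identical.

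One genuine slip to fix in Part (1): the sentence ``Being primitive in the nondegenerate lattice $H^2(X,{\mathbf Z})$, ${\rm NS}(X)$ is nondegenerate'' is false as a general principle --- a primitive sublattice of a unimodular lattice can perfectly well be degenerate (e.g.\ an isotropic line in a hyperbolic plane). The paper avoids this by deducing projectivity of $X$ directly from $g_X^*\omega_X=-\omega_X$ via the Nikulin/Beauville theorem (its Theorem~2.6): once $X$ is projective, ${\rm NS}(X)$ is automatically hyperbolic of signature $(1,\rho-1)$, and then $\rho\ge 2$ follows from the topological Lefschetz count $0=2+{\rm tr}(g_X^*|{\rm NS})-(22-\rho)$ together with $|{\rm tr}(g_X^*|{\rm NS})|\le\rho$ when $\rho=1$. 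Your alternative route (an isometry of spectral radius $>1$ forces ${\rm NS}(X)$ to be indefinite of rank $\ge 2$, hence hyperbolic, hence $X$ projective) can be made to work too, but you should replace the primitivity claim by the observation that ${\rm NS}(X)\subset H^{1,1}_{\mathbf R}$ has signature $(p,q)$ with $p\le 1$, and that a negative semi\-definite integral lattice cannot carry an isometry with an eigenvalue $>1$ (the kernel is preserved, and on the definite quotient the isometry group is finite, so all eigenvalues of the integral matrix would be roots of unity --- but then the product of all eigenvalues of $g_X^{2*}|H^2$ could not be $\pm 1$ with one of them $>1$ coming from the kernel alone; more directly, the Salem eigenvector with eigenvalue $>1$ is isotropic and lies in the closed positive cone, forcing $p=1$).
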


We prove (1) in section 2. Our proof is based on a weaker version of the topological Lefschetz fixed point formula, a weaker version of the holomorphic Lefschetz fixed point formula (see eg. \cite{GH}, Chapter 3, section 4) and the following basic result of Cantat (\cite{Ca}, Proposition 1):

\begin{theorem}\label{cantat}
Let $S$ be a smooth compact K\"ahler surface admitting an automorphism of positive entropy. Then $S$ is bimeromorphic to either ${\mathbf P}^2$, $2$-dimensional complex torus, an Enriques surface, or a K3 surface. 
\end{theorem}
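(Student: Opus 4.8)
The plan is to combine the Enriques--Kodaira classification of $S$ with the principle that an automorphism of a surface preserving a fibration over a curve has zero entropy; only the four listed birational types escape this principle.

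First I would isolate the reduction. Suppose $g\in{\rm Aut}\,(S)$ has positive entropy, so that $\lambda:=d_{1}(g)>1$ and $g$ has infinite order. The $\mathbf{R}$-linear extension of $g^{*}$ to $H^{1,1}(S,\mathbf{R})$ is an isometry of the cup-product form, which by the Hodge index theorem has signature $(1,h^{1,1}(S)-1)$, and it preserves the K\"ahler cone. Since $g^{*}$ respects the Hodge decomposition and acts unitarily on $H^{2,0}(S)\oplus H^{0,2}(S)$ for the positive Hermitian form $\alpha\mapsto\int_{S}\alpha\wedge\bar{\alpha}$, its eigenvalues there have modulus $1$; hence the spectral radius of $g^{*}$ on $H^{1,1}(S,\mathbf{R})$ equals $\lambda>1$, and $g^{*}$ acts on the associated real hyperbolic space as a hyperbolic isometry. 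Such an isometry fixes exactly two rays in the closed positive cone, both on its boundary, namely $\mathbf{R}_{\ge 0}\theta^{\pm}$ with $g^{*}\theta^{\pm}=\lambda^{\pm 1}\theta^{\pm}$; in particular $g^{*}$ fixes no nonzero nef class $h$ with $h^{2}=0$. Consequently $g$ can preserve no fibration over a curve: if $\pi\colon S\to C$ is a surjective morphism with connected fibres onto a smooth curve and $\pi\circ g=\bar{g}\circ\pi$ for some $\bar{g}\in{\rm Aut}\,(C)$, then the class $F$ of a general fibre is nonzero and nef, satisfies $F^{2}=0$ and $g^{*}F=F$ (all fibres being homologous), a contradiction. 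The same applies to any blow-up of $S$ to which $g$ lifts, since blowing up alters neither $d_{1}$ nor the existence of such a class $F$.

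Next I would run through the classification. For $\kappa(S)\ge 0$, the unique minimal model $S_{0}$ of $S$ inherits $g$ as an automorphism $g_{0}$, and $d_{1}(g_{0})=d_{1}(g)$, because $g^{*}$ preserves the orthogonal splitting of $H^{2}(S,\mathbf{Z})$ into the image of $H^{2}(S_{0},\mathbf{Z})$ and the negative definite lattice spanned by the exceptional classes of $S\to S_{0}$, acting through a finite group on the latter. If $\kappa(S)=2$, then $g_{0}$ has infinite order, contradicting the finiteness of ${\rm Aut}\,(S_{0})$ for a minimal surface of general type. If $\kappa(S)=1$, then $S_{0}$ carries its unique --- hence $g_{0}$-equivariant --- elliptic fibration, contradicting the reduction applied to $(S_{0},g_{0})$. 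Thus $\kappa(S)\in\{0,-\infty\}$. If $\kappa(S)=0$, then $S_{0}$ is a compact K\"ahler surface of Kodaira dimension $0$, hence --- by Enriques--Kodaira, the Kodaira surfaces being non-K\"ahler --- a complex torus, a K3 surface, an Enriques surface, or a bielliptic surface; a bielliptic $S_{0}$ carries the $g_{0}$-equivariant Albanese fibration onto an elliptic curve, excluded as before, so $S$ is bimeromorphic to a torus, a K3 surface, or an Enriques surface. If $\kappa(S)=-\infty$, then $S$, being K\"ahler, is either rational or birational to a ruled surface over a smooth curve $B$ of genus $\ge 1$; in the latter case the Albanese morphism ${\rm alb}\colon S\to{\rm Alb}\,(S)$ is $g$-equivariant with one-dimensional image, so the Stein factorization of ${\rm alb}$ gives a $g$-equivariant fibration of $S$ over a curve, again excluded; hence $S$ is bimeromorphic to ${\mathbf P}^{2}$. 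Assembling the surviving cases gives the statement.

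The point that requires real care is to invoke only fibrations that are forced to be $g$-invariant --- the pluricanonical fibration when $\kappa(S)=1$, the Albanese fibration in the bielliptic and irrationally ruled cases, but \emph{not} the non-canonical elliptic fibrations that K3 and Enriques surfaces also carry; this is precisely why those two types, together with complex tori and ${\mathbf P}^{2}$, are correctly left in. The remaining bookkeeping --- descending a positive-entropy automorphism of a non-ruled surface to its minimal model without changing $d_{1}$, and deducing $d_{1}=1$ from an invariant fibration --- is routine given uniqueness of minimal models, negative-definiteness of exceptional lattices, and the hyperbolic-geometry description above.
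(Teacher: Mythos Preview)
The paper does not prove this theorem at all: it is quoted verbatim as ``the following basic result of Cantat (\cite{Ca}, Proposition 1)'' and used as a black box in the proof of Theorem~\ref{main}(1). So there is no in-paper argument to compare against.

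Your reconstruction is the standard proof and is correct. The two essential ingredients---(i) a positive-entropy automorphism cannot fix (with eigenvalue $1$) any nonzero nef class, hence cannot preserve a fibration over a curve, and (ii) every Enriques--Kodaira class except the four listed is forced, via a \emph{canonical} fibration (Iitaka for $\kappa=1$, Albanese for bielliptic and irrationally ruled) or via finiteness of automorphisms ($\kappa=2$), to carry a $g$-equivariant fibration---are exactly what Cantat uses. Your closing remark that one must invoke only \emph{intrinsic} fibrations, so that K3 and Enriques surfaces are not accidentally excluded by their many non-canonical elliptic pencils, is the right point of care. One tiny wording quibble: you use ``fixes'' in two senses in the same sentence (first ``fixes two rays'' meaning setwise, then ``fixes no nonzero nef class'' meaning $g^{*}h=h$); the intended meaning is clear from context, but it would read more cleanly to say ``has no nonzero eigenvector of eigenvalue $1$ in the closed positive cone.''
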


It may be also worth comparing (1) with the fact that a K3 surface admitting an automorphism with Siegel disk (hence of positive entropy and with a fixed point) is of algebraic dimension $0$ (\cite{Mc}, see also \cite{Og}, Example (2.5)).

We shall prove (2) by using the global Torelli theorem for K3 surfaces 
(see eg. \cite{BHPV}, Chapter VIII, Theorem (11.1)) and 
the {\it golden number} 
$$\eta := \frac{\sqrt{5} + 1}{2}\,\, .$$ 
Our construction (Theorem (\ref{exsitence})) will be done in Section 4, based on a lattice-theoretic preparations in Section 3. See also Remarks (\ref{nonmimimal}), (\ref{high}) for relevant results. After posting this note on ArXiv, Professor Bert van Geemen (\cite{Ge}) informed me a very impressive coincidence that the surfaces and also the free automorphisms in our note have been described by Cayley (\cite{Cay}) in 1870 in completely different ways, and that a paper explaining this is in preparation.

{\bf Acknowledgement.} I would like to express my thank to Professor Shu Kawaguchi for valuable discussion and to Professors Mitsuhiro Shishikura and Hiroki Sumi for their encouragement and interest in this work, and to Professor Bert van Geemen for his interest in this work and a very important imformation about Cayley's work.  

\section{smooth compact K\"ahler surface with a free automorphism of positive entropy}

In this section, we shall prove Theorem (\ref{main}) (1). 

Let $M$ be a compact K\"ahler manifold of dimension $n$ 
and $g \in {\rm Aut}\, (M)$ 
be a biholomorphic automorphism of $M$. We define:
$$T(M, g) := \sum_{k=0}^{2n} (-1)^k {\rm tr}\, g^* \vert H^k(M, {\mathbf Z})\,\, ;$$
$$H(M, g) := \sum_{k=0}^{n} (-1)^k {\rm tr}\, g^* \vert H^k(M, {\mathcal O}_M)\,\, .$$
Here ${\rm tr}\, f$ is the trace of $f$. $T(M, g)$ is the {\it topological Lefschetz number} and $H(M, g)$ is the 
{\it holomorphic Lefschetz number}. It is not hard to see that 
$T(M, g)$ (resp. $H(M, g)$) is the integration of the Poincar\'e dual $\sigma_{\Delta}$ of the diagonal $\Delta \subset M \times M$ (resp. the sum of $(0, *)$-th Kunneth components of $\sigma_{\Delta}$) over the graph $\Gamma_{g}$ of $g$ (see eg. \cite{GH}, Pages 420-421, 423-424). Therefore, without more precise formula, we have:
\begin{proposition}\label{lefschetz} If $M^g = \emptyset$, then 
$T(M, g) = L(M, g) = 0$. 
\end{proposition}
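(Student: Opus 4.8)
The plan is to deduce Proposition \ref{lefschetz} directly from the interpretation of the Lefschetz numbers as intersection-theoretic quantities on $M \times M$. The key geometric fact, recalled in the excerpt, is that $T(M,g)$ is the integral over the graph $\Gamma_g$ of the Poincar\'e dual $\sigma_\Delta$ of the diagonal $\Delta \subset M \times M$, and that $L(M,g) := H(M,g)$ is the integral over $\Gamma_g$ of the sum of the $(0,*)$-Kunneth components of $\sigma_\Delta$. Since both are obtained by integrating a class supported (after choosing a representative) near $\Delta$ against the cycle $\Gamma_g$, the first step is to observe that $\Gamma_g \cap \Delta = \{(x,x) \mid g(x) = x\} = M^g$. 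Hence the hypothesis $M^g = \emptyset$ says precisely that $\Gamma_g$ and $\Delta$ are disjoint compact submanifolds of the compact K\"ahler manifold $M \times M$.

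First I would make precise the sense in which $\int_{\Gamma_g} \sigma_\Delta$ depends only on data near $\Delta$. One clean way: $\sigma_\Delta$ can be represented by a closed form $\omega_\Delta$ supported in an arbitrarily small tubular neighbourhood $U$ of $\Delta$ (a Thom form of the normal bundle of $\Delta$). If $M^g = \emptyset$, then by compactness $\Gamma_g$ is disjoint from a neighbourhood of $\Delta$, so we may choose $U$ with $U \cap \Gamma_g = \emptyset$; then $\omega_\Delta|_{\Gamma_g} = 0$ identically and $T(M,g) = \int_{\Gamma_g} \omega_\Delta = 0$. The identical argument applies to $L(M,g)$: the $(0,*)$-Kunneth components of $\sigma_\Delta$ are again cohomology classes on $M \times M$, they are restricted to $\Gamma_g$ and integrated, and this restriction vanishes once a representative is chosen supported away from $\Gamma_g$. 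Alternatively, and perhaps more in keeping with the intersection-theoretic phrasing, one notes that $T(M,g)$ equals the intersection number $\Gamma_g \cdot \Delta$ in $M \times M$ (as this is what $\int_{\Gamma_g}\sigma_\Delta$ computes, both being $2n$-dimensional cycles in a $2n$-dimensional manifold whose homology classes pair via $\sigma_\Delta$), and two disjoint cycles have intersection number zero; the holomorphic version is the analogous statement for the Dolbeault-cohomological pairing.

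The main obstacle — really the only point requiring care — is making rigorous the claim that the relevant cohomology classes on $M\times M$ admit representatives supported in an arbitrarily small neighbourhood of $\Delta$, and that integrating such a representative over $\Gamma_g$ genuinely computes $T(M,g)$ or $L(M,g)$. For $\sigma_\Delta$ this is the standard Thom form construction for the normal bundle of the submanifold $\Delta$; for the Kunneth components one must check that taking a single Kunneth component is compatible with this localization, which follows because the decomposition $\sigma_\Delta = \sum_{p} \sigma_\Delta^{(p,*)}$ into Kunneth components is a decomposition of cohomology classes and each summand can independently be represented by a form supported near $\Delta$. Since the excerpt explicitly grants us the description of $T(M,g)$ and $H(M,g)$ as these integrals over $\Gamma_g$ (citing \cite{GH}, pp.~420--421, 423--424), I would simply invoke that and conclude: when $M^g = \emptyset$ the cycle $\Gamma_g$ avoids a neighbourhood of $\Delta$, so all these integrals vanish, giving $T(M,g) = L(M,g) = 0$.
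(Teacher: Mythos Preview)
Your proposal is correct and follows exactly the paper's approach: the paper offers no proof beyond the sentence preceding the proposition, simply noting that $T(M,g)$ and $H(M,g)$ are integrals over $\Gamma_g$ of (Kunneth pieces of) the Poincar\'e dual of $\Delta$, so that $\Gamma_g \cap \Delta = M^g = \emptyset$ forces both to vanish. Your Thom-form expansion is a reasonable fleshing-out of this; the one soft spot---that each Kunneth component of $\sigma_\Delta$ admits a representative supported near $\Delta$---is not something you actually justify, but since you (like the paper) ultimately defer to \cite{GH} pp.~420--424 for the localization, this is acceptable at the level of rigor the paper intends.
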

Let $S$ be a smooth compact K\"ahler surface and $g \in {\rm Aut}\, (S)$ 
be a biholomorphic automorphism of $S$. 
\begin{lemma}\label{rational} Assume that $S$ is bimeromorphic (in fact, birational) to either ${\mathbf P}^2$ or an Enriques surface. Then $S^g \not= \emptyset$.
\end{lemma}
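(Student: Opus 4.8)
The plan is to eliminate the two cases separately by producing a nonzero Lefschetz-type number and invoking Proposition \ref{lefschetz}. First I would treat the case when $S$ is birational to $\mathbf{P}^2$, i.e. $S$ is a rational surface. Then $H^1(S,\mathcal{O}_S) = H^2(S,\mathcal{O}_S) = 0$, so $H(S,g) = \mathrm{tr}\, g^*\vert H^0(S,\mathcal{O}_S) = 1 \neq 0$. By the weaker holomorphic Lefschetz fixed point formula (Proposition \ref{lefschetz}), $S^g \neq \emptyset$. This settles the rational case immediately, with no case analysis on $g$ needed.

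For the Enriques case the holomorphic Lefschetz number is again easy: an Enriques surface has $H^1(S,\mathcal{O}_S) = H^2(S,\mathcal{O}_S) = 0$, so once more $H(S,g) = 1 \neq 0$, and Proposition \ref{lefschetz} gives $S^g \neq \emptyset$. So in fact I expect the same one-line argument via the holomorphic Lefschetz number to dispose of both cases simultaneously, without ever needing the topological Lefschetz number here. I should note that $S^g$ being nonempty is a birational-invariant-free statement: even if $S$ is only birational (not isomorphic) to $\mathbf{P}^2$ or an Enriques surface, $S$ is itself a smooth compact K\"ahler surface with $q(S) = 0$ and $p_g(S) = 0$ (these are bimeromorphic invariants for surfaces), so the computation $H(S,g) = 1$ still applies verbatim.

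The only genuine subtlety, and the step I would flag as the place to be careful, is the invariance claim: I must make sure that $h^{0,1}(S) = h^{0,2}(S) = 0$ really does hold for \emph{every} smooth compact K\"ahler surface birational to $\mathbf{P}^2$ or to an Enriques surface, not just for a minimal model. This follows because blowing up a point does not change $h^{0,1}$ or $h^{0,2}$ (equivalently, $q$ and $p_g$ are bimeromorphic invariants of surfaces), and a minimal rational surface or an Enriques surface has $q = p_g = 0$. With that in hand, $H(S,g) = 1$ for any $g \in \mathrm{Aut}(S)$, and Proposition \ref{lefschetz} forces $S^g \neq \emptyset$, which is exactly the assertion of Lemma \ref{rational}. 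I do not expect to need Theorem \ref{cantat} or any finer fixed-point information for this particular lemma; it is the torus and K3 cases (handled elsewhere) that require more work.
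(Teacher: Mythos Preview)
Your proposal is correct and follows exactly the same approach as the paper: compute the holomorphic Lefschetz number $H(S,g)=1$ from the vanishing $H^k(S,\mathcal{O}_S)=0$ for $k\geq 1$, and then invoke Proposition~\ref{lefschetz}. The paper's proof is terser (it handles both cases in one line without discussing bimeromorphic invariance of $q$ and $p_g$), but your added justification for why $h^{0,1}=h^{0,2}=0$ persists on non-minimal models is a welcome clarification of an implicit step.
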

\begin{proof} Note that $H^k(S, {\mathcal O}_S) = 0$ for $k \ge 1$. Thus
$$H(S, g) = {\rm tr}\, (g^* \vert H^0(M, {\mathcal O}_S)) = 1 \not= 0\,\, .$$
Hence $S^g \not= \emptyset$. 
\end{proof}
\begin{lemma}\label{others} Assume that $S$ is bimeromorphic to either $2$-dimensional complex torus or a K3 surface. Let $\overline{S}$ be the minimal model 
of $S$ and $\pi : S \rightarrow \overline{S}$ be the naturally 
induced morphism. Then $g$ descends to an automorphism $\overline{g}$ of $\overline{S}$. Moreover, $\overline{g}$ is of positive entropy (resp. free) if and only if $g$ is of positive entropy (resp. free). 
\end{lemma}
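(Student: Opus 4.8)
The plan is to handle the three assertions of Lemma~\ref{others} in turn: that $g$ descends to $\overline{g} \in \mathrm{Aut}(\overline{S})$, that positive entropy is preserved in both directions, and that freeness is preserved in both directions. For the descent, recall that a surface bimeromorphic to a torus or K3 surface is minimal if and only if it contains no $(-1)$-curves, and that the minimal model $\overline{S}$ is unique. The morphism $\pi : S \to \overline{S}$ contracts precisely the finitely many $(-1)$-curves (and, iteratively, the curves contracted in the sequence of blow-downs); equivalently $\pi$ is the composition of the contractions of all curves in the union $E$ of the exceptional loci. Since $g$ is an automorphism of $S$, it permutes the $(-1)$-curves of $S$, hence preserves $E$; more precisely $g$ maps the fibers of $\pi$ to fibers of $\pi$, so by the universal property of blow-downs there is a unique $\overline{g} \in \mathrm{Aut}(\overline{S})$ with $\pi \circ g = \overline{g} \circ \pi$. (One can also phrase this via the canonical morphism to the canonical model, or simply note that $g$ acts on the set of all curves with negative self-intersection and that $\pi$ is the contraction of the unique maximal such configuration that can be contracted.)

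Next I would treat positive entropy. Since $\pi$ is a birational morphism, $\pi^* : H^2(\overline{S}, \mathbf{Z}) \to H^2(S, \mathbf{Z})$ is injective, its image is a $g^*$-stable primitive sublattice, and $\pi^*$ is equivariant: $\pi^* \circ \overline{g}^* = g^* \circ \pi^*$. Hence the characteristic polynomial of $\overline{g}^* \vert H^2(\overline{S}, \mathbf{Z})$ divides that of $g^* \vert H^2(S, \mathbf{Z})$. Conversely, the orthogonal complement of $\mathrm{im}(\pi^*)$ is spanned by the classes of the exceptional curves, and $g^*$ acts on this complement by a permutation (up to sign) of a finite set, so all its eigenvalues have absolute value $1$. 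Therefore the set of absolute values of eigenvalues of $g^* \vert H^2(S)$ is the union of that for $\overline{g}^* \vert H^2(\overline{S})$ and $\{1\}$, which gives $d_1(g) = d_1(\overline{g})$; by the Gromov--Yomdin theorem quoted in the introduction, $g$ has positive entropy if and only if $\overline{g}$ does.

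Finally, freeness. If $\overline{S}^{\overline{g}} = \emptyset$ I claim $S^g = \emptyset$: a fixed point $x \in S^g$ would map to a fixed point $\pi(x) \in \overline{S}^{\overline{g}}$, a contradiction. For the reverse direction suppose $S^g \neq \emptyset$; I want $\overline{S}^{\overline{g}} \neq \emptyset$. The subtlety is that $\pi$ is not injective, so a fixed point of $\overline{g}$ need not lift to a fixed point of $g$ — a priori $\overline{g}$ could have an isolated fixed point at a blown-down point $p = \pi(E_i)$ while $g$ merely permutes $E_i$ nontrivially. This is the step I expect to be the main obstacle. To rule it out I would argue as follows: $g$ permutes the finitely many $(-1)$-curves of $S$; since $g$ has finite order on the (finite) dual graph, replacing $g$ by a suitable power is not allowed here, so instead one observes that each $g$-orbit of exceptional curves that $\pi$ contracts to a single point $p$ either (a) is fixed curve-wise, in which case $p$ is the limit of fixed points and the image of the fixed locus already meets $\overline{S}$ nontrivially, or (b) is genuinely permuted, in which case the center $p$ is still a fixed point of $\overline{g}$. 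So one must be a little careful, and the cleanest route is: by Proposition~\ref{lefschetz}, it suffices to show $T(\overline{S}, \overline{g}) = T(S, g)$ and $H(\overline{S}, \overline{g}) = H(S, g)$ are unchanged under $\pi$, which forces $\overline{S}^{\overline{g}} = \emptyset$ whenever $S^g = \emptyset$ via the converse Lefschetz-type statement — but since we only have the weak one-directional Proposition~\ref{lefschetz}, instead I will argue directly on fixed loci. Concretely: $S^g \neq \emptyset$ implies, via $\pi$, that $\overline{S}^{\overline{g}} \supseteq \pi(S^g) \neq \emptyset$, which is immediate and requires no case analysis after all — any $x$ with $g(x)=x$ gives $\overline{g}(\pi(x)) = \pi(g(x)) = \pi(x)$. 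Thus freeness transfers in both directions, and the apparent obstacle dissolves once one notes we only need the trivial implication ``$\pi$ is equivariant and surjective.'' The one genuinely nontrivial input is the equivariance of $\pi$ itself, i.e. the descent in the first paragraph. $\qed$
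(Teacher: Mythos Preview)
Your treatment of the descent of $g$ to $\overline{g}$ and of the entropy equivalence is correct and essentially identical to the paper's: uniqueness of the minimal model for non-negative Kodaira dimension gives the descent, and the orthogonal decomposition $H^2(S,\mathbf Z)=\pi^*H^2(\overline S,\mathbf Z)\oplus E$ with $E$ negative definite and $g^*$-stable gives $d_1(g)=d_1(\overline g)$.

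The freeness part, however, contains a genuine logical slip. You first prove the implication ``$\overline g$ free $\Rightarrow$ $g$ free'' by pushing a hypothetical fixed point of $g$ down through $\pi$. You then announce ``the reverse direction'' but set it up as ``suppose $S^g\neq\emptyset$; I want $\overline S^{\,\overline g}\neq\emptyset$'', which is the \emph{contrapositive of the direction you already proved}, not the other implication. Your subsequent discussion of the real obstacle (a fixed point of $\overline g$ lying under a permuted exceptional tree) is on the right track, but you then abandon it, re-derive the trivial implication $\pi(S^g)\subset \overline S^{\,\overline g}$, and declare victory. The implication ``$g$ free $\Rightarrow$ $\overline g$ free'' is never established, and this is precisely the direction used later in Lemmas~\ref{torus} and~\ref{k3} to pass to the minimal model.

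The paper fills this gap by arguing on the fiber: if $\overline g(P)=P$, then $g$ preserves $\pi^{-1}(P)$. If $\pi^{-1}(P)$ is a single point it is fixed by $g$; if it is a tree of $\mathbf P^1$'s, one identifies a $g$-stable component $C\simeq\mathbf P^1$ (the paper takes the proper transform of the first exceptional curve over $P$), and any automorphism of $\mathbf P^1$ has a fixed point. More generally, any automorphism of a finite tree of rational curves fixes either a component or a node, and in either case one obtains a fixed point of $g$. You should replace your final paragraph with an argument of this type.
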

\begin{proof} The first assertion follows from the uniqueness of the minimal model of $S$ with non-negative Kodaira dimension (see eg. \cite{BHPV}, Page 99, Proposition (4.6)). It is clear that $g$ is free if so is $\overline{g}$. Conversely, if $\overline{g}$ has a fixed point, say $P$, then $g$ acts on $\pi^{-1}(P)$, which is either a point, say $Q$, or a tree of ${\mathbf P}^1$. In the first case $g(Q) = Q$. In the second case, let $C$ be the proper transform of the exceptional curve at the first blow up at $P$. Then $g(C) = C$ and $g$ has at least one fixed point (as $C \simeq {\mathbf P}^1$). Hence $\overline{g}$ is free if and only if so is $g$. Consider the following equivariant decomposition: 
$$H^2(S, {\mathbf Z}) = \pi^*H^2(\overline{S}, {\mathbf Z}) \oplus E$$
where $E$ is the sublattice generated by the classes of exceptional divisors. 
The lattice $E$ is of negative definite and $g^{*}(E) = E$. So, the eigenvalues $g^{*} \vert E$ are of absolute value $1$. Hence $d_1(g) >1$ if and only if $d_1(\overline{g}) > 1$. 
\end{proof}
\begin{lemma}\label{torus} Assume that $S$ is bimeromorphic to a $2$-dimensional complex torus. Assume further that $S$ has a free automorphism $g$. 
Then $g$ is of null entropy. 
\end{lemma}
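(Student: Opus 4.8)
The plan is to pass to the minimal model and reduce the statement to a direct analysis of automorphisms of a $2$-torus. By Lemma (\ref{others}), the induced automorphism $\overline{g}$ of $\overline{S} = V/\Lambda$, where $V = {\mathbf C}^{2}$ and $\Lambda \simeq {\mathbf Z}^{4}$, is again free, and $\overline{g}$ is of positive entropy if and only if $g$ is; so it suffices to show that every free automorphism of a $2$-dimensional complex torus has null entropy. Writing $\overline{g}(x) = Ax + b$ with $A \in \mathrm{GL}(V)$, $A\Lambda = \Lambda$ and $b \in V/\Lambda$, I would first record that translations are homotopic to the identity, so $\overline{g}^{*}$ on $H^{*}(\overline{S}, {\mathbf Z}) = \wedge^{*}H^{1}(\overline{S}, {\mathbf Z})$ depends only on $A$. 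If $\lambda, \mu$ denote the eigenvalues of $A$ on $V$, then $\overline{g}^{*}$ acts on $H^{1,0}$ with eigenvalues $\lambda, \mu$, on $H^{0,1}$ with eigenvalues $\overline{\lambda}, \overline{\mu}$, and hence on $H^{2}(\overline{S}, {\mathbf C}) = \wedge^{2}H^{1}$ with the six eigenvalues $\lambda\mu$, $\overline{\lambda}\,\overline{\mu}$, $|\lambda|^{2}$, $|\mu|^{2}$, $\lambda\overline{\mu}$, $\overline{\lambda}\mu$. Therefore $d_{1}(\overline{g}) = \max(|\lambda|, |\mu|)^{2}$.

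Next I would extract two facts. First, the ${\mathbf R}$-linear determinant of $A$ acting on $V$ equals $|\lambda|^{2}|\mu|^{2} = |\det_{\mathbf C} A|^{2}$ and, since $A$ preserves $\Lambda$ and $\Lambda \otimes_{\mathbf Z} {\mathbf R} = V$, it also equals $\det(A \mid \Lambda) = \pm 1$; being nonnegative it equals $1$, so $|\lambda||\mu| = 1$. In particular $\max(|\lambda|, |\mu|) \ge 1$, and $\overline{g}$ is of positive entropy if and only if $|\lambda| \ne 1$. Second, I claim that freeness forces $1$ to be an eigenvalue of $A$: if not, $A - I : V \to V$ is invertible, and since $(A - I)\Lambda \subseteq A\Lambda + \Lambda = \Lambda$ it descends to a surjective endomorphism of $\overline{S}$; then $-b$ lies in its image, so there is $x$ with $(A - I)x = -b$ in $\overline{S}$, i.e. $\overline{g}(x) = x$, contradicting freeness.

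Combining the two facts finishes the proof: freeness puts $1$ among the eigenvalues of $A$, and then $|\lambda||\mu| = 1$ confines the other eigenvalue to the unit circle as well, so $d_{1}(\overline{g}) = \max(|\lambda|, |\mu|)^{2} = 1$ and $\overline{g}$, hence $g$, is of null entropy. I do not expect a genuine obstacle here; the argument really hinges on the single observation that on a torus being free and having positive entropy are incompatible, because freeness forces the eigenvalue $1$ in the linear part while the determinant condition then pins all eigenvalues onto the unit circle. The only points requiring a little care are the correct identification of the eigenvalues of $\overline{g}^{*}$ on the Hodge components of $H^{2}$ in terms of those of $A$, and the classical fact that an affine self-map of a complex torus whose linear part does not admit $1$ as an eigenvalue always has a fixed point.
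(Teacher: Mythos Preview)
Your argument is correct and reaches the same intermediate conclusion as the paper---namely, that the linear part $A$ must have $1$ as an eigenvalue---but you get there by a different mechanism. The paper invokes the holomorphic Lefschetz number: since $g$ is free, $H(S,g)=0$, and on a $2$-torus $H(S,g)=(1-\overline{\alpha})(1-\overline{\beta})$, forcing one eigenvalue to equal $1$. You instead argue directly that if $1\notin\mathrm{Spec}(A)$ then $A-I$ is invertible on $V$ and descends to a surjective endomorphism of the torus, so the translation part $b$ is in its image and a fixed point exists. Your route is more elementary and self-contained---it avoids the Lefschetz machinery entirely and uses only the group structure of the torus---while the paper's route keeps the argument parallel to the other cases (rational, Enriques, K3), all of which are handled uniformly via the vanishing of $T(S,g)$ or $H(S,g)$. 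After this step the two proofs coincide: the determinant constraint $|\lambda\mu|=1$ together with one eigenvalue on the unit circle pins the other there as well, and $H^2=\wedge^2 H^1$ gives $d_1=1$.
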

\begin{proof} By Lemma (\ref{others}), we may (and will) assume that $S$ itself is minimal, i.e., $S$ is a $2$-dimensional complex torus. Then under the global linear coordinates ${\mathbf x} := (x_1, x_2)$ of the universal cover ${\mathbf C}^2$ of $S$, the automorphism $g$ is written as in the following form:
$$g^*({\mathbf x}) = A{\mathbf x} + {\mathbf b}\,\, ,$$
where $A \in {\rm GL}_2({\mathbf C})$ and ${\mathbf b} \in {\mathbf C}^2$. 
Let $\alpha$ and $\beta$ be the eigenvalues of $A$ 
counted with multiplicities. Then $\alpha$ and $\beta$ are the eigenvalues of the action of $g$ on $H^0(S, \Omega_S^1)$. By the Hodge duality together with the fact that $H^0(S, \Omega_S^2) = \wedge^2 H^0(S, \Omega_S^1)$, 
we obtain
$$H(S, g) = 1 - (\overline{\alpha} + \overline{\beta}) + \overline{\alpha}\overline{\beta} = (1 -\overline{\alpha})(1-\overline{\beta})\,\, .$$
Since $H(S, g) = 0$, it follows that either $\alpha = 1$ or $\beta = 1$. 
Moreover, by 
$$H^1(S, {\mathbf Z}) \otimes {\mathbf C} = H^0(S, \Omega_S^1) \oplus \overline{H^0(S, \Omega_S^1)}\,\, ,$$
it follows that the eigenvalues of (the ${\mathbf C}$-linear extension of) $g^* \vert H^1(S, {\mathbf Z})$ are $\alpha$, $\beta$, $\overline{\alpha}$ and $\overline{\beta}$ counted with multiplicities. Thus 
$$\alpha\beta\overline{\alpha}\overline{\beta} = \pm 1\,\, ,$$
because $g^* \vert H^1(S, {\mathbf Z}) \in {\rm GL}(H^1(S, {\mathbf Z})) \simeq {\rm GL}_4({\mathbf Z})$. 
Hence if either $\alpha = 1$ or $\beta = 1$, then both $\vert \alpha \vert = 1$ and $\vert \beta \vert = 1$ hold. Since $H^2(S, {\mathbf Z}) = \wedge^2 H^1(S, {\mathbf Z})$, it follows that the eigenvalues of $g^* \vert H^2(S, {\mathbf Z})$ are of absolute value $1$. Hence $d_1(g) = 1$. 
\end{proof}
\begin{lemma}\label{k3} Assume that $S$ is bimeromorphic to a K3 surface 
$\overline{S}$. 
Let $\omega_S$ be a generator of $H^0(\Omega_S^2) \simeq {\mathbf C}$. 
Assume further that $S$ has a free automorphism $g$. 
Then $g^* \omega_S = -\omega_S$ and $S$ is projective. Moreover 
$\rho(\overline{S}) \ge 2$ where $\rho(\overline{S})$ is the Picard number 
of $\overline{S}$.
\end{lemma}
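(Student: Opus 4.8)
The plan is to exploit the holomorphic Lefschetz fixed point formula again, now on the minimal model $\overline{S}$, which by Lemma~(\ref{others}) carries an induced free automorphism $\overline{g}$ of positive entropy; since projectivity and the Picard number are properties of $\overline{S}$, and $g^*\omega_S$ is determined by $\overline{g}^*\omega_{\overline{S}}$ via $\pi^*$, it suffices to argue on the K3 surface itself. So first I would reduce to the case $S = \overline{S}$ a K3 surface with a free automorphism $g$ of positive entropy, and write $\lambda := \overline{g}^*\omega_S / \omega_S \in {\mathbf C}^\times$ for the scalar by which $g$ acts on the one-dimensional space $H^0(S, \Omega_S^2)$.

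Next I would compute $H(S, g)$. For a K3 surface, $H^0(\sO_S) = {\mathbf C}$, $H^1(\sO_S) = 0$, and $H^2(\sO_S) \simeq {\mathbf C}$ with $g^*$ acting by $\overline{\lambda}$ (the conjugate appears because $H^2(\sO_S) = \overline{H^0(\Omega_S^2)}$ under Hodge symmetry). Hence $H(S,g) = 1 + \overline{\lambda}$. Since $S^g = \emptyset$, Proposition~(\ref{lefschetz}) forces $H(S,g) = 0$, so $\overline{\lambda} = -1$, i.e. $\lambda = -1$ and $g^*\omega_S = -\omega_S$. In particular $g$ is non-symplectic of even order on the transcendental side, and $g^{2*}\omega_S = \omega_S$.

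To get projectivity, I would use the fact that $g^*$ preserves the Hodge decomposition of $H^2(S,{\mathbf C})$ and acts on $H^{2,0} \oplus H^{0,2}$ by the scalars $-1, -1$, hence as $-\mathrm{id}$ on the real subspace $(H^{2,0}\oplus H^{0,2})\cap H^2(S,{\mathbf R})$. The orthogonal complement of this plane inside $H^2(S,{\mathbf R})$ contains $H^{1,1}_{{\mathbf R}}$; more precisely, the transcendental lattice $T(S)$, being the minimal primitive sublattice whose complexification contains $\omega_S$, is $g^*$-stable, and the eigenvalue $-1$ on $\omega_S$ rules out the possibility that a Kähler class lies in $T(S)\otimes{\mathbf R}$. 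Now a Kähler class $\kappa$ averaged over $\langle g \rangle$ — equivalently, taking the $g^*$-invariant part $\kappa + g^*\kappa + \cdots$ — stays Kähler (it is a sum of Kähler classes) and is $g^*$-invariant, but since $g^*$ has no invariant vectors in $T(S)\otimes{\mathbf R}$, this invariant Kähler class lies in $\mathrm{NS}(S)\otimes{\mathbf R}$; therefore $\mathrm{NS}(S)$ contains a class of positive self-intersection, and by the projectivity criterion for K3 surfaces (a K3 surface is projective iff $\mathrm{NS}$ contains such a class), $S$ is projective. The main obstacle I expect is this last step: making precise that the invariant part of a Kähler class is both nonzero and of signature-$(1,\cdot)$ type inside $\mathrm{NS}(S)\otimes{\mathbf R}$, i.e. genuinely landing in the positive cone; one must use that $g^*$ acts on $T(S)\otimes{\mathbf R}$ with all eigenvalues $\neq 1$ (here even equal to $-1$ on the $\omega_S$-plane and of modulus $1$ elsewhere, but crucially without eigenvalue $1$), which is where the computed sign $-1$ does the real work.

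Finally, $\rho(\overline{S}) \ge 2$ follows from positive entropy: if $\rho(\overline{S}) = 1$ then $\mathrm{NS}(\overline{S})$ has rank $1$, so $g^*$ acts on it by $\pm 1$, while on $T(\overline{S})$ the eigenvalues of $g^*$ all have absolute value $1$ (its characteristic polynomial is a product of cyclotomic polynomials, $g$ acting on a definite-enough piece — indeed $T(\overline{S})$ carries a sign-$(2,\rho^\perp)$ form but the $g^*$-action there, commuting with a nowhere-zero holomorphic $2$-form up to the scalar $-1$, has finite order on the relevant rational eigenspaces); hence $d_1(g) = 1$, contradicting positive entropy. Thus $\rho(\overline{S}) \ge 2$, completing the proof. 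I would double-check the entropy argument by recalling Cantat's and McMullen's description of $g^*$ on $H^2$ of a K3: the spectral radius $> 1$ must be attained on $\mathrm{NS}$, forcing $\mathrm{rk}\,\mathrm{NS} \ge 2$.
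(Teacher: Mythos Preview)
Your computation of $g^*\omega_S = -\omega_S$ via $H(S,g)=0$ matches the paper exactly. The genuine gap is in your projectivity step: you propose to average a K\"ahler class $\kappa$ over $\langle g\rangle$, but $g$ has infinite order (you yourself import the positive-entropy hypothesis), so the sum $\kappa + g^*\kappa + g^{2*}\kappa + \cdots$ does not make sense, and no finite truncation is $g^*$-invariant. The finite-order averaging trick is precisely Beauville's argument, and it does not apply here. Moreover, your supporting claim that $g^*$ has no eigenvalue $1$ on $T(S)\otimes\mathbf{R}$ is itself what needs proof: the standard route to $g^*\vert T(S) = -\mathrm{id}$ uses that $T(S)\otimes\mathbf{Q}$ is an irreducible Hodge structure, which in turn relies on $\mathrm{NS}(S)$ being nondegenerate---i.e.\ on projectivity. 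The paper avoids this circularity by invoking Theorem~(\ref{hk}) (Nikulin/Beauville, extended in \cite{Og} to infinite order) as a black box: once $g^*\omega_{\overline S}=-\omega_{\overline S}$ with $-1\neq 1$ a root of unity, projectivity follows directly.

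For $\rho(\overline S)\ge 2$ your approach is genuinely different from the paper's. The paper uses the \emph{topological} Lefschetz number: $T(\overline S,\overline g)=0$ together with $\overline g^*\vert T(\overline S)=-\mathrm{id}$ (Nikulin) forces the trace on $\mathrm{NS}$ to be $18$ regardless of entropy, ruling out $\rho=1$. Your argument instead says the spectral radius $>1$ must live on $\mathrm{NS}$; this is correct once projectivity and $\overline g^*\vert T(\overline S)=-\mathrm{id}$ are in hand, but it imports the positive-entropy hypothesis, which the lemma as stated does not assume. In the paper's intended application that extra hypothesis is available, so this is a minor point compared to the projectivity gap.
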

\begin{proof} We use the same notation as in Lemma (\ref{others}). 
Note that $H^1({\mathcal O}_S) = 0$ and that $H^0(\Omega_S^2) = 
{\mathbf C}\omega_S$ is the Serre dual of $H^2({\mathcal O}_S)$. 
Let $g^* \omega_S = \alpha \omega_S$. Then 
$$0 = H(S, g) = 1 + \alpha^{-1}\,\, $$ 
and therefore $\alpha = -1$. This also implies that $\overline{g}^*\omega_{\overline{S}} = -\omega_{\overline{S}}$. Therefore $\overline{S}$ is projective 
by Theorem (\ref{hk}) below. 
Since the induced automorphism $\overline{g}$ of $\overline{S}$ is free by Lemma (\ref{others}),  it follows that 
$T(\overline{S}, \overline{g}) = 0$. Let ${\rm NS}\,(\overline{S})$ 
be the N\'eron-Severi lattice of $\overline{S}$ and $T(\overline{S})$ 
be the transcendental lattice of $\overline{S}$, i.e., 
$$T(\overline{S}) = \{ x \in H^2(\overline{S}, {\mathbf Z}) \vert (x.\, {\rm NS}\, (\overline{S})) = 0\}\,\, .$$ 
Since $\overline{S}$ is projective, it follows that the lattice ${\rm NS}\, (\overline{S})$ is non-degenerate and therefore ${\rm NS}\, (\overline{S}) \oplus T(\overline{S})$ is a subgroup of finite index of $H^2(\overline{S}, {\mathbf Z})$. We also note that ${\rm rank}\, {\rm NS}\, (\overline{S}) = \rho(\overline{S}) \ge 1$ (as $\overline{S}$ is projective) and ${\rm rank}\, T(\overline{S}) = 22 - \rho(\overline{S})$. 
Thus 
$$0 = T(\overline{S}, \overline{g}) = 2 + {\rm tr}\, \overline{g}^* \vert {\rm NS}\, (\overline{S}) + {\rm tr}\, \overline{g}^* \vert T(\overline{S})\,\, .$$
Since $\overline{g}^* \omega_{\overline{S}} = -\omega_{\overline{S}}$, it follows that $\overline{g}^* \vert T(\overline{S}) = -id_{T(\overline{S})}$ 
by \cite{Ni1}, Theorem 0.1 (see also \cite{Og}, Theorem 2.4 (2)). So, if $\rho(\overline{S}) = 1$, 
then 
$$T(\overline{S}, \overline{g}) = 2 + 1 - (22 -1) = -20 \not= 0\,\, .$$
Therefore $\rho(\overline{S}) \not= 1$, whence $\rho(\overline{S}) \ge 2$.   
\end{proof}
\begin{theorem}\label{hk} Let $X$ be a compact hyperk\"ahler manifold, that is, a smooth simply-connected compact K\"ahler manifold with an everywhere nondegenerate holomorphic $2$-form $\omega_X$ such that $H^0(\Omega^2_X) 
= {\mathbf C}\omega_X$. Assume that $X$ admits a bimeromorphic 
automorphism $g$ such that $g^*\omega_X = \zeta_n \omega_X$, where 
$\zeta_n \not= 1$ and $\zeta_n$ is a root of unity. Then $X$ is 
projective.
\end{theorem}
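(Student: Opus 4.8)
The plan is to reduce to the classical Hodge-theoretic criterion that a compact hyperkähler manifold $X$ is projective if and only if there exists a class $\lambda \in H^{1,1}(X, \mathbf{R}) \cap H^2(X, \mathbf{Q})$ with $q_X(\lambda) > 0$, where $q_X$ is the Beauville--Bogomolov form (this is the analogue, valid in the hyperkähler setting, of the fact that a K3 surface is projective iff its transcendental lattice carries no positive class orthogonal to an ample one). So the real content is to produce such a rational $(1,1)$-class of positive square from the hypothesis on $g$.

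The key steps, in order, are as follows. First, pass from the bimeromorphic automorphism $g$ to an honest automorphism: a bimeromorphic self-map of a hyperkähler manifold is an isomorphism in codimension one, hence acts on $H^2(X, \mathbf{Z})$ preserving $q_X$ and the Hodge structure, so we may work entirely with the induced isometry $g^* \in O(H^2(X,\mathbf{Z}), q_X)$; note $g^*$ still satisfies $g^*\omega_X = \zeta_n \omega_X$. Second, decompose $H^2(X,\mathbf{C})$ into eigenspaces. Since $\zeta_n$ is a nontrivial root of unity, $g^*$ acts on the line $\mathbf{C}\omega_X$ by $\zeta_n$ and on $\overline{\mathbf{C}\omega_X}$ by $\overline{\zeta_n} = \zeta_n^{-1}$, both $\neq 1$. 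Third, consider the subspace $V \subset H^2(X,\mathbf{R})$ on which $g^*$ acts \emph{without} eigenvalue $1$, equivalently $V = \mathrm{im}(g^* - \mathrm{id})$ over $\mathbf{R}$; since $g^*$ is defined over $\mathbf{Z}$, the fixed part $W := \ker(g^* - \mathrm{id})$ and its complement are defined over $\mathbf{Q}$, and $\omega_X, \bar\omega_X \in V_{\mathbf{C}}$. Hence the Kähler class of $X$, call it $\kappa$, has its $W$-component orthogonal to $\omega_X + \bar\omega_X$; but more usefully, averaging $\kappa$ over the finite cyclic group $\langle g^* \rangle$ produces a $g^*$-invariant class $\kappa_0 = \frac{1}{n}\sum_{i} (g^*)^i \kappa$ which lies in $W_{\mathbf{R}} \cap H^{1,1}$, and $q_X(\kappa_0) > 0$ provided $\kappa_0 \neq 0$.

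The main obstacle, and the crux of the argument, is exactly ensuring $\kappa_0 \neq 0$ — i.e. that the Kähler cone is not entirely "rotated away" by $g^*$ — and then upgrading $\kappa_0$ from a real to a rational class. For the first point: since $\omega_X \in V_{\mathbf{C}}$ lies in the $\zeta_n$-eigenspace with $\zeta_n \neq 1$, and $W = \ker(g^*-\mathrm{id})$ is orthogonal to $V$ under $q_X$, the restriction $q_X|_W$ is a form of signature $(1, \rho')$ for some $\rho' \ge 0$ (it sees the "algebraic" positive direction but not the holomorphic symplectic directions); one checks that the averaged Kähler class $\kappa_0$ is precisely the $W$-component of $\kappa$, and this is nonzero because $q_X(\kappa,\kappa) > 0$ while $q_X$ restricted to the $g^*$-coinvariants $V$ is negative semidefinite on its real points away from the isotropic $\omega_X$-plane — so the positive-square class $\kappa$ cannot have all its mass in $V$. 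For rationality: $W$ is a $\mathbf{Q}$-subspace of $H^2(X,\mathbf{Q})$ lying inside $H^{1,1}(X,\mathbf{R})$ (because $\omega_X$ and $\bar\omega_X$ are in $V_{\mathbf{C}}$), and any nonzero class of positive $q_X$-square in the real points of a rational subspace can be approximated by, hence replaced with, a rational class of positive square since the positivity condition is open. That rational $(1,1)$-class of positive square gives projectivity of $X$ by the hyperkähler projectivity criterion, completing the proof. For the case at hand where $X$ is a K3 surface this is all concrete: $q_X$ is the cup-product, $W \subseteq \mathrm{NS}(X)$ has a positive direction, and one invokes the surjectivity of the period map / Torelli only implicitly through the standard projectivity criterion.
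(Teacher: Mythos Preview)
Your sketch follows Beauville's argument for the finite-order case, and the paper itself does not give a self-contained proof here: it simply cites Nikulin, Beauville, and \cite{Og} for the successive generalisations. So you are not meant to reconstruct much. That said, your proposal has a genuine gap that matters for exactly the application the paper makes of this theorem.

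You write ``averaging $\kappa$ over the finite cyclic group $\langle g^*\rangle$'', but nothing in the hypothesis forces $g^*$ to have finite order on $H^2(X,\mathbf{Z})$: the assumption is only that the eigenvalue of $g^*$ on the line $\mathbf{C}\omega_X$ is a nontrivial root of unity. Indeed, in Lemma~2.5 the paper invokes this theorem for a K3 automorphism of \emph{positive entropy} with $g^*\omega_S=-\omega_S$; there $g^*$ has an eigenvalue $>1$ on ${\rm NS}(S)$ and is certainly of infinite order, so your average $\frac{1}{n}\sum_i (g^*)^i\kappa$ is not defined. In that same example one computes $W=\ker(g^*-\mathrm{id})=0$, so the strategy ``find a positive rational class in $W$'' cannot by itself witness projectivity in all cases covered by the statement.

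Your fallback signature claim, that $q_X$ restricted to $V=\mathrm{im}(g^*-\mathrm{id})$ is ``negative semidefinite on its real points away from the isotropic $\omega_X$-plane'', is also not correct as stated: the real $2$-plane spanned by $\mathrm{Re}\,\omega_X$ and $\mathrm{Im}\,\omega_X$ is positive definite (not isotropic), and there is no a priori reason the third positive direction of $q_X$ cannot also lie in $V$ --- again the positive-entropy K3 example has $V=H^2(S,\mathbf{R})$ of signature $(3,19)$. In Beauville's actual argument (finite order), $\kappa_0\neq 0$ holds for the much simpler reason that $\kappa_0$ is a finite average of genuine K\"ahler classes $(g^i)^*\kappa$, hence itself K\"ahler. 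Extending beyond the finite-order case is precisely the content of \cite{Og}, Theorem~2.4, and requires a different argument that you have not supplied.
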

\begin{proof} This theorem is first proved by Nikulin (\cite{Ni1}, Theorem (0.1)) when $X$ is a K3 surface and $g$ is of finite order. Then this theorem is generalized by Beauville (\cite{Be}, Proposition 6, see also Proposition 7) when $X$ is a hyperk\"ahler manifold and $g$ is a biholomorphic automorphism of finite order and by \cite{Og}, Theorem (2.4) when $X$ is a hyperk\"ahler manifold and $g$ is a bimeromorphic automorphism (not necessarily of finite order). 
\end{proof} 
The following remark was the starting point of our construction in sections 3 and 4, even though it is no longer needed in the construction. 
\begin{remark}\label{neron-severi} Let$S$ be a K3 surface of Picard number $2$ 
with a free automorphism $g$. Let $\varphi(t)$ be the characteristic polynomial of $g^* \vert {\rm NS}\, (S)$. Then $\varphi(t)$ is either 
$t^2 - 18t + 1$ or $t^2 - 18t - 1$. In particular, $g$ is of positive entropy. 
Moreover, if $\varphi(t) = t^2 - 18t + 1$, then the eigenvalues 
of $g^* \vert {\rm NS}\, (S)$ are $9 \pm 4\sqrt{5} = \eta^{\pm 6}$, where $\eta$ is the golden number (see Introduction for the definition of $\eta$). 
\end{remark}
\begin{proof} Since $\rho(S) =2$ and $g^* \vert T(S) = -id_{T(S)}$ as observed in the proof of Lemma (\ref{k3}), it follows that
$$0 = H(S, g) = 2 + {\rm tr}\, g^* \vert {\rm NS}\, (S) + (-1)(22 - 2)\,\, .$$
Hence ${\rm tr}\, g^* \vert {\rm NS}\, (S) = 18$. Since $g^* \vert {\rm NS}\, (S) \in {\rm GL}\, ({\rm NS}\, (S)) \simeq {\rm GL}_2({\mathbf Z})$, 
it follows that ${\rm det}\, (g^* \vert {\rm NS}\, (S)) = \pm 1$. 
Thus $\varphi(t) = t^2 - 18t \pm 1$, and one of eigenvalues of $g^* \vert {\rm NS}\, (S)$ is greater than $1$. The last assertion can be checked by a straightforward calculation.
\end{proof}

Now we are ready to complete the proof of Theorem (\ref{main}) (1). 
Let $S$ be a smooth compact K\"ahler surface and $g$ be a free automorphism of positive entropy. Since $g$ is of positive entropy, $S$ 
is bimeromorphic to either ${\mathbf P}^2$, an Enriques surface, $2$-dimensional complex torus or a K3 surface by Theorem (\ref{cantat}). Since $g$ is free, the first two cases are ruled out by Lemma (\ref{rational}). Since $g$ is free but of positive entropy, the third case is also ruled out by Lemma (\ref{torus}). Hence 
$S$ is birational to a projective K3 surface of Picard number $\ge 2$ by Lemma (\ref{k3}). This proves Theorem (\ref{main})(1).

\section{hyperbolic lattices arizing from the golden number}

In this section, we shall construct even hyperbolic lattices from the golden number $\eta$ (see Introduction for the definition of $\eta$).
The minimal polynomial of $\eta$ over $\mathbf Z$ is $t^2 - t -1$,  
$$N := {\mathbf Z}[\eta]$$ 
is the ring of algebraic integers of ${\mathbf Q}(\sqrt{5})$ and $\langle \eta \rangle \times \langle -1 \rangle$ is the unit group ${\mathbf Z}[\eta]^{\times}$ of $N$. 
\begin{lemma}\label{fibonacci} Let $\{a_n\}_{n \ge 0}$ be the 
Fibonacci sequence:
$$a_{n+2} = a_{n+1} + a_n\,\, ,\,\, a_0 = 0\,\, ,\,\, a_1 = 1\,\, .$$
Then $\eta^n = a_n \eta + a_{n-1}$ for each positive integer $n$. For instance 
$\eta^3 = 2\eta + 1$, $\eta^4 = 3\eta + 2$, $\eta^5 = 5\eta + 3$, $\eta^6 = 8 \eta + 5$, $\eta^7 = 13 \eta + 8$. 
\end{lemma}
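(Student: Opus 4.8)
The plan is to prove the identity $\eta^n = a_n\eta + a_{n-1}$ by induction on $n$, using only the defining relation $\eta^2 = \eta + 1$ (equivalently, that $\eta$ is a root of $t^2-t-1$) together with the recursion for the Fibonacci numbers. First I would record the extended Fibonacci sequence with $a_0 = 0$, $a_1 = 1$, so that $a_{-1} = 1$ makes sense (from $a_1 = a_0 + a_{-1}$); this is only needed to give the $n=1$ case the uniform shape $\eta^1 = a_1\eta + a_0 = \eta$, so in practice I would simply start the induction at $n=1$ and also check $n=2$ as the base of a two-step bookkeeping, namely $\eta^2 = \eta + 1 = a_2\eta + a_1$.

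For the inductive step, assume $\eta^n = a_n\eta + a_{n-1}$ holds for some $n \ge 1$. Multiply both sides by $\eta$ to get $\eta^{n+1} = a_n\eta^2 + a_{n-1}\eta$, then substitute $\eta^2 = \eta + 1$ to obtain
$$\eta^{n+1} = a_n(\eta+1) + a_{n-1}\eta = (a_n + a_{n-1})\eta + a_n = a_{n+1}\eta + a_n,$$
where the last equality is exactly the Fibonacci recursion $a_{n+1} = a_n + a_{n-1}$. This closes the induction and establishes the general formula. The specific values $\eta^3 = 2\eta+1$, $\eta^4 = 3\eta+2$, $\eta^5 = 5\eta+3$, $\eta^6 = 8\eta+5$, $\eta^7 = 13\eta+8$ then follow by reading off $(a_n, a_{n-1})$ for $n = 3,\dots,7$, i.e. from the initial segment $0,1,1,2,3,5,8,13$ of the Fibonacci sequence.

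There is essentially no obstacle here: the only subtlety is making the base case and the indexing match the stated normalization $a_0 = 0$, $a_1 = 1$, which is handled by starting at $n=1$ (where the claim reads $\eta = \eta$) and, if one prefers a clean single-step induction, invoking $\eta^2 = \eta + 1$ directly as the $n=2$ case. Everything else is the one-line substitution above.
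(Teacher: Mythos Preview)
Your proof is correct and follows exactly the approach indicated in the paper, which simply says to argue by induction on $n$ using $\eta^2 = \eta + 1$. You have spelled out the details of that induction cleanly, including the base case and the one-line inductive step.
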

\begin{proof} Argue by induction on $n$ using the fact that $\eta^2 = \eta + 1$.
\end{proof}

The elements $e_1 := 1$ and $e_2 := \eta$ form the free basis of $N$ as ${\mathbf Z}$-module:
$$N = {\mathbf Z}e_1 \oplus {\mathbf Z}e_2 \simeq {\mathbf Z}^2\,\, .$$
Let $n \in {\mathbf Z}$. Since $\eta \in {\mathbf Z}[\eta]^{\times}$, the homomorphism
$$\eta^n : N \rightarrow N\,\, ;\,\, p(\eta) \mapsto \eta^np(\eta)$$
is an automorphism of the ${\mathbf Z}$-module $N$. 
\begin{lemma}\label{ev} Let $n$ be a positive integer. 

(1) The eigenvalues of the automorphism $\eta^{2n}$ on $N$ 
are $\eta^{2n}$ and $1/\eta^{2n}$. 

(2) The characteristic polynomial of $\eta^{2n}$ on $N$ is $t^2 - (a_{2n} + 2a_{2n-1}) +1$, where $\{a_n\}_{n \ge 0}$ is the Fibonacci sequence. For instance, the characteristic polynomials of $\eta^2$, $\eta^4$, $\eta^6$ are $t^2 -3t +1$, $t^2 - 7t +1$, $t^2 - 18t +1$ respectively. 
\end{lemma}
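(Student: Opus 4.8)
The plan is to compute the matrix of the multiplication map $\eta^{2n}\colon N\to N$ in the free ${\mathbf Z}$-basis $\{e_1,e_2\}=\{1,\eta\}$ and read off its characteristic polynomial, using Lemma \ref{fibonacci} throughout. By that lemma $\eta^{2n}=a_{2n}\eta+a_{2n-1}$, so $\eta^{2n}\cdot 1=a_{2n-1}+a_{2n}\eta$, while $\eta^{2n}\cdot\eta=a_{2n}\eta^2+a_{2n-1}\eta=a_{2n}(\eta+1)+a_{2n-1}\eta=a_{2n}+a_{2n+1}\eta$, where I used $\eta^2=\eta+1$ and the recursion $a_{2n}+a_{2n-1}=a_{2n+1}$. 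Hence the matrix of $\eta^{2n}$ on $N$ is $\begin{pmatrix} a_{2n-1} & a_{2n}\\ a_{2n} & a_{2n+1}\end{pmatrix}$, and its characteristic polynomial is $t^2-(a_{2n-1}+a_{2n+1})t+(a_{2n-1}a_{2n+1}-a_{2n}^2)$.

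Next I would simplify the two coefficients. For the linear term, $a_{2n-1}+a_{2n+1}=a_{2n-1}+(a_{2n}+a_{2n-1})=a_{2n}+2a_{2n-1}$, exactly the claimed coefficient. For the constant term, the cleanest route is that the determinant of multiplication by an element of $N$ equals its norm over ${\mathbf Q}$, so $a_{2n-1}a_{2n+1}-a_{2n}^2=N_{{\mathbf Q}(\sqrt5)/{\mathbf Q}}(\eta^{2n})=N_{{\mathbf Q}(\sqrt5)/{\mathbf Q}}(\eta)^{2n}=(-1)^{2n}=1$ (alternatively one invokes Cassini's identity $a_{m-1}a_{m+1}-a_m^2=(-1)^m$, proved by the same induction as Lemma \ref{fibonacci}). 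This gives (2), and the three displayed examples follow by substituting $n=1,2,3$ together with the values of the Fibonacci numbers listed in Lemma \ref{fibonacci}.

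Finally, for (1): over $\C$ the eigenvalues of $\eta^{2n}$ on $N$ are the images of $\eta^{2n}$ under the two embeddings ${\mathbf Q}(\sqrt5)\hookrightarrow\C$, namely $\eta^{2n}$ and $\bar\eta^{2n}$, with $\bar\eta=(1-\sqrt5)/2$ the Galois conjugate of $\eta$; since $\eta\bar\eta=N_{{\mathbf Q}(\sqrt5)/{\mathbf Q}}(\eta)=-1$ we get $\bar\eta^{2n}=(\eta\bar\eta)^{2n}\eta^{-2n}=\eta^{-2n}$, as asserted. (Equivalently, (1) drops out of (2): the roots of $t^2-(\eta^{2n}+\eta^{-2n})t+1$ are visibly $\eta^{\pm 2n}$ once one notes $a_{2n}+2a_{2n-1}=\eta^{2n}+\eta^{-2n}$, which is immediate from $\eta^{2n}=a_{2n}\eta+a_{2n-1}$ and $\eta^{-2n}=\bar\eta^{2n}=a_{2n}\bar\eta+a_{2n-1}=a_{2n}(1-\eta)+a_{2n-1}$.) There is no genuine obstacle here; the only points needing a little care are the Fibonacci indexing and the justification of the constant term, both handled above, and I would present the norm/trace computation as the main line with the explicit $2\times 2$ matrix as the concrete check behind the sample polynomials.
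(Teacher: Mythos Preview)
Your proof is correct. The route differs mildly from the paper's: the paper proves (1) first by noting that the eigenvalues of multiplication by $\eta$ on $N$ are the roots $\eta$ and $-1/\eta$ of $t^2-t-1$, so those of $\eta^{2n}$ are $\eta^{2n}$ and $(-1/\eta)^{2n}=\eta^{-2n}$; it then reads off (2) from the product $\eta^{2n}\cdot\eta^{-2n}=1$ and the sum $\eta^{2n}+\eta^{-2n}=a_{2n}(\eta-1/\eta)+2a_{2n-1}=a_{2n}+2a_{2n-1}$ via the Galois conjugate. You instead write down the matrix of $\eta^{2n}$ explicitly and compute trace and determinant, which forces you to justify the constant term separately (by norm or Cassini). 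Both are short and rest on the same ingredients (Lemma~\ref{fibonacci} and the Galois conjugate); the paper's ordering makes the determinant $=1$ automatic, while your explicit matrix has the advantage that the sample polynomials for $n=1,2,3$ can be read off without any further thought.
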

\begin{proof} The eigenvalues of $\eta$ on $N$ 
are 
$$\frac{1 \pm \sqrt{5}}{2} = \eta\,\, ,\,\, \frac{-1}{\eta}\,\, .$$
Hence the assertion (1) follows. By Lemma (\ref{fibonacci}), we have 
$$\eta^{2n} = a_{2n} \eta + a_{2n-1}\,\, .$$
Taking the Galois conjugate $\eta \mapsto -1/\eta$, we obtain 
$$\frac{1}{\eta^{2n}} = a_{2n} \frac{-1}{\eta} + a_{2n-1}\,\, .$$
Thus 
$$\eta^{2n} + \frac{1}{\eta^{2n}} = a_{2n} (\eta + \frac{-1}{\eta}) + 2a_{2n-1} = a_{2n} + 2a_{2n-1}\,\, .$$
This together with (1) implies (2).
\end{proof} 
Let 
$$b : N \times N \rightarrow {\mathbf Z}$$
be a ${\mathbf Z}$-valued symmetric bilinear form on $N$. With respect to the basis $\langle e_1, e_2 \rangle$, the form $b$ is represented by the symmetric matrix 
$$S_b := \left(\begin{array}{rr}
p & q\\
q & r\\
\end{array} \right)\,\, ,$$
where $p = b(e_1, e_1)$, $q = b(e_1, e_2)$ and $r = b(e_2, e_2)$. In terms of the matrix $S_b$, the form $b$ even if $p$, $r$ are even integers and $b$ is hyperbolic if the matrix $S_b$ is of signature $(1,1)$. We call a pair $(N, b)$ an 
{\it even hyperbolic lattice} if $b$ is a ${\mathbf Z}$-valued even hyperbolic symmetric bilinear form on $N$. We have then a natural embedding 
$$N \subset N^* := {\rm Hom}_{{\mathbf Z}}(N, {\mathbf Z}) \subset N \otimes {\mathbf Q}$$ 
by $b$. The quotient group $N^*/N$ is called the {\it discriminant group} 
of $N$. 

An automorphism $f : N \rightarrow N$ as ${\mathbf Z}$-module is called an 
{\it isometry} of the lattice $(N, b)$ if $b(f(x), f(y)) = b(x, y)$ for all $x, y \in N$. Note that an isometry $f$ naturally induces an automorphism (as abelian group) of the discriminat group $N^*/N$. 

We are interested in an even hyperbolic lattice structure $(N, b)$ on our $N$ such that $\eta^{2}$, whence $\eta^{2n}$ for all integer $n$, become isometries.

\begin{proposition}\label{lattice} Assume that $(N, b)$ is an even hyperbolic lattice and that $\eta^2$ is an isometry of $(N, b)$. Then: 

(1) The matrix 
$S_b$ is of the following form:
$$S_b := \left(\begin{array}{rr}
2q & q\\
q & -2q\\
\end{array} \right)\,\, ,$$
where $q$ is a non-zero integer (and vice versa). 

(2) Under (1), the discriminant group $N^*/N$ satisfies
$$N^*/N = \langle \frac{e_2}{q} \rangle \oplus \langle \frac{e_1 - 2e_2}{5q} \rangle \simeq {\mathbf Z}/q \oplus {\mathbf Z}/5q\,\, .$$ 

(3) Under (1), $b$ does not represent $0$, i.e., there is no $x \in N$ 
such that $b(x, x) = 0$. Moreover $b$ does not 
represent $\pm 2$, i.e., there is no $x \in N$ 
such that $b(x, x) = \pm 2$ if and only if $q \not= \pm 1$. 

(4) Under (1), $\eta^6$ acts on the discriminant group $N^*/N$ 
as $-id_{N^*/N}$ if and only if 
$q$ is one of $\pm 1$, $\pm 2$.
\end{proposition}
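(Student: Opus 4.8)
The plan is to reduce the whole proposition to explicit $2\times 2$ integral linear algebra governed by the matrix of $\eta^{2}$ on $N$ in the basis $\langle e_{1},e_{2}\rangle$. Using $\eta^{2}=\eta+1$ and $\eta^{3}=2\eta+1$ from Lemma (\ref{fibonacci}), that matrix is $A=\left(\begin{smallmatrix}1&1\\1&2\end{smallmatrix}\right)$. For (1), the requirement that $\eta^{2}$ be an isometry is the identity $A^{\mathrm t}S_{b}A=S_{b}$; writing out its three distinct entries yields the relations $2q+r=0$, $p+2q+2r=0$, $p+4q+3r=0$, whose solution is $p=2q$, $r=-2q$ with $q$ arbitrary. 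Conversely, for any integer $q$ the matrix $\left(\begin{smallmatrix}2q&q\\q&-2q\end{smallmatrix}\right)$ has even diagonal, so $b$ is even, and $\det S_{b}=-5q^{2}$, so $S_{b}$ has signature $(1,1)$, i.e.\ $b$ is hyperbolic, exactly when $q\neq 0$. This proves (1).

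For (2) I would first record the identity
\[
b(xe_{1}+ye_{2},\,xe_{1}+ye_{2})=2q\,(x^{2}+xy-y^{2})=2q\cdot\mathrm{Nm}(x+y\eta),
\]
where $\mathrm{Nm}$ is the norm form of $\mathbf{Q}(\sqrt 5)$ restricted to $N=\mathbf{Z}[\eta]$; this is also what drives (3). Inverting $S_{b}$ produces the $b$-dual basis $e_{1}^{*}=(2e_{1}+e_{2})/5q$, $e_{2}^{*}=(e_{1}-2e_{2})/5q$, so $N^{*}=\mathbf{Z} e_{1}^{*}\oplus\mathbf{Z} e_{2}^{*}$. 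One then checks directly that $e_{2}/q=e_{1}^{*}-2e_{2}^{*}$ and $(e_{1}-2e_{2})/5q=e_{2}^{*}$ lie in $N^{*}$, that their images in $N^{*}/N$ have orders $|q|$ and $5|q|$ (using that $e_{1},e_{2}$ are primitive in $N$), and that the subgroup they generate is an internal direct sum: a combination $a(e_{2}/q)-b\,e_{2}^{*}$ lies in $N$ only if $5q\mid b$ and $5q\mid 5a+2b$, forcing $a\equiv b\equiv 0$. Hence that subgroup has order $|q|\cdot 5|q|=5q^{2}=|\det S_{b}|=|N^{*}/N|$, so it is all of $N^{*}/N$, giving (2). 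This direct-sum/order count is the only place needing a little care; everything else is mechanical.

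Part (3) is then immediate from the norm identity: $\mathrm{Nm}(z)=0$ only for $z=0$ in a field, so $b(x,x)=2q\,\mathrm{Nm}(x)=0$ forces $x=0$, i.e.\ $b$ has no nonzero isotropic vector; and $b(x,x)=\pm 2$ forces $q\,\mathrm{Nm}(x)=\pm 1$, hence $|q|=1$, while conversely if $q=\pm 1$ then $\mathrm{Nm}(1)=1$ and $\mathrm{Nm}(\eta)=-1$ show that both values $\pm 2$ are actually attained. Thus $b$ represents neither $+2$ nor $-2$ precisely when $q\neq\pm1$.

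For (4), note that $\eta^{6}$ acts on $N$, and hence (being an isometry, so preserving $N^{*}$) on $N^{*}$, by $B:=A^{3}=\left(\begin{smallmatrix}5&8\\8&13\end{smallmatrix}\right)$ — equivalently via $\eta^{6}=8\eta+5$, $\eta^{7}=13\eta+8$ from Lemma (\ref{fibonacci}). The induced map on $N^{*}/N$ equals $-\mathrm{id}$ iff $(B+I)N^{*}\subseteq N$, and since $N^{*}=\mathbf{Z} e_{1}^{*}\oplus\mathbf{Z} e_{2}^{*}$ it suffices to test $e_{1}^{*}$ and $e_{2}^{*}$: a short computation gives $(B+I)e_{1}^{*}=(4e_{1}+6e_{2})/q$ and $(B+I)e_{2}^{*}=-(2e_{1}+4e_{2})/q$, each of which lies in $N$ exactly when $q\mid 2$, i.e.\ $q\in\{\pm1,\pm2\}$. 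This proves (4), and with it the proposition. I do not expect any genuine obstacle here: the entire argument lives among the $2\times 2$ integral matrices attached to powers of $\eta$, the one slightly delicate step being the internal-direct-sum verification in (2).
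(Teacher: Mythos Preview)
Your argument is correct and follows essentially the same route as the paper: both reduce to the explicit $2\times 2$ computations coming from $\eta^{2}(e_{1})=e_{1}+e_{2}$, $\eta^{2}(e_{2})=e_{1}+2e_{2}$ and $\eta^{6}=8\eta+5$, and both verify (2) by exhibiting the two generators, checking their orders, and matching against $|\det S_{b}|=5q^{2}$. Your only cosmetic departures are the matrix formulation $A^{\mathrm t}S_{b}A=S_{b}$ in (1), the (nice) identification of $a^{2}+ab-b^{2}$ with the norm form of $\mathbf{Z}[\eta]$ in (3), and testing $(B+I)$ on the dual basis $e_{1}^{*},e_{2}^{*}$ rather than on the paper's generators $e_{2}/q,\,(e_{1}-2e_{2})/5q$ in (4).
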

\begin{proof} Let $2p = b(e_1, e_1)$, $q = b(e_1, e_2)$ and 
$2r = b(e_2, e_2)$. 
By Lemma (\ref{fibonacci}), 
$$\eta^2(e_1) = \eta^2 = \eta + 1 = e_1 + e_2\,\, ;\,\, \eta^2(e_2) = \eta^3 = 2\eta + 1 = e_1 + 2e_2\,\, .$$
Hence $\eta^2$ is an isometry if and only if 
$$b(e_1, e_1) = b(e_1 + e_2, e_1 + e_2)\,\, ,\,\, b(e_1, e_2) = b(e_1 + e_2, e_1 + 2e_2)\,\, ,\,\, b(e_2, e_2) = b(e_1 + 2e_2, e_1 + 2e_2)\,\, ,$$
that is,
$$2p = 2p + 2q + 2r\,\, ,\,\, q = 2p + 3q + 4r\,\, ,\,\, 2r = 2p + 4q + 8r\,\, .$$
This is equivalent to $r = -q$ and $p = q$. Now it is straightforward to see that $S_b$ is hyperbolic if and only if $q \not= 0$. This proves (1). 

By using (1), we compute that 
$$b(e_1, \frac{e_2}{q}) = 1\,\, ,\,\, b(e_2, \frac{e_2}{q}) = -2\,\, ;\,\, 
b(e_1, \frac{e_1 - 2e_2}{5q}) = 0\,\, ,\,\, b(e_2, \frac{e_1 - 2e_2}{5q}) = 1\,\, .$$
Hence $e_2/q, (e_1 - 2e_2)/5q \in N^*/N$. Looking at the shape, we see that $e_2/q, (e_1 - 2e_2)/5q$ are of order $\vert q \vert$ and $\vert 5q \vert$ respectively and $\langle e_2/q \rangle \cap \langle (e_1 - 2e_2)/5q \rangle = 0$ in $N^*/N$. Combining this with the fact that
$$\vert N^*/N \vert = \vert {\rm det}\, S_ b \vert = 5q^2\,\, ,$$
we obtain (2).   

Write $x = ae_1 +be_2$ ($a, b \in {\mathbf Z}$). Then 
$$b(x, x) = 2q(a^2 + ab - b^2)\,\, .$$
The result (3) follows from this equality. 

Let us prove (4). By Lemma (\ref{fibonacci}), we have $\eta^6 = 8 \eta + 5$. $\eta^7 = 13 \eta + 8$. That is,  
$$\eta^6 (e_1) = 5e_1 + 8e_2\,\, ,\,\, \eta^6(e_2) = 8e_1+ 13e_2\,\, .$$
Thus 
$$\eta^6(\frac{e_2}{q}) = \frac{8e_1+ 13e_2}{q}\,\, ,\,\, 
\eta^6(\frac{e_1 - 2e_2}{5q}) = -\frac{11e_1+ 18e_2}{5q}\,\, .$$
Hence $\eta^6 \vert N^*/N = -id_{N^*/N}$ if and only if both
$$\frac{8e_1+ 14e_2}{q}\,\, ,\,\, \frac{2e_1+ 4e_2}{q}$$
are in $N$. This is equivalent to $q \vert 2$. This proves (4). 
\end{proof}
Taking $q = 2$ in Proposition (\ref{lattice}), we obtain the following result 
(see also Lemma (\ref{ev}) for the first statement of (2)):
\begin{corollary}\label{desiredlattice}
Let $(N, b)$ be the lattice (on our $N$) given by the matrix:
$$S_b := \left(\begin{array}{rr}
4 & 2\\
2 & -4\\
\end{array} \right)\,\, ,$$
i.e., the case where $q = 2$ in Proposition (\ref{lattice}). Then: 

(1) $(N, b)$ is an even hyperbolic lattice which represents neither 
$0$ nor $\pm 2$.

(2) $\eta^6$ is an isometry of $(N, b)$ such that the characteristic 
polynomial 
is $t^2 -18t +1$ and the induced action on the discriminant group 
$N^*/N$ is $-id_{N^*/N}$. 
\end{corollary}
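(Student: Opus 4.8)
The plan is to obtain Corollary (\ref{desiredlattice}) as a direct specialization of Proposition (\ref{lattice}) to the value $q = 2$, with no new ideas required — everything is a substitution and a lookup of facts already established. First I would observe that with $q = 2$ the matrix $S_b = \left(\begin{smallmatrix} 4 & 2 \\ 2 & -4 \end{smallmatrix}\right)$ has even diagonal entries, so $(N,b)$ is an even lattice, and $\det S_b = -16 - 4 = -20 < 0$, so the form has signature $(1,1)$ and is hyperbolic; alternatively this is precisely the content of Proposition (\ref{lattice})(1) for the admissible choice $q = 2 \neq 0$. Then the fact that $(N,b)$ represents neither $0$ nor $\pm 2$ is exactly Proposition (\ref{lattice})(3): the ``does not represent $0$'' part holds for every non-zero $q$, and the ``does not represent $\pm 2$'' part holds precisely because $q = 2 \neq \pm 1$. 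This gives statement (1).

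For statement (2), the assertion that $\eta^6$ is an isometry of $(N,b)$ is immediate from Proposition (\ref{lattice}): since $\eta^2$ is an isometry of $(N,b)$ whenever $b$ has the form in part (1) (which it does, with $q=2$), its iterate $\eta^6 = (\eta^2)^3$ is also an isometry. The characteristic polynomial of $\eta^6$ acting on $N$ is $t^2 - 18t + 1$ by Lemma (\ref{ev})(2) (the case $n = 3$, where $a_6 + 2a_5 = 8 + 10 = 18$); this polynomial is independent of the lattice structure, as it only depends on $\eta^6$ as an automorphism of the underlying $\mathbf Z$-module $N$. Finally, the induced action of $\eta^6$ on the discriminant group $N^*/N$ is $-\mathrm{id}_{N^*/N}$ by Proposition (\ref{lattice})(4), since $q = 2$ is among the listed values $\pm 1, \pm 2$ for which this holds (equivalently, $q \mid 2$).

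There is essentially no main obstacle here: the corollary is purely a matter of checking that $q = 2$ satisfies the hypotheses ``$q \neq 0$'', ``$q \neq \pm 1$'', and ``$q \mid 2$'' invoked in the respective parts of Proposition (\ref{lattice}), and of recording the value $a_6 + 2a_5 = 18$ from the Fibonacci recursion in Lemma (\ref{ev}). If anything warrants a second's care, it is only making sure the sign conventions line up — that the matrix displayed in the corollary is indeed the $q = 2$ instance of the matrix $\left(\begin{smallmatrix} 2q & q \\ q & -2q \end{smallmatrix}\right)$ from Proposition (\ref{lattice})(1), which it visibly is. Hence the proof will consist of little more than citing Proposition (\ref{lattice}) and Lemma (\ref{ev}) with the substitution $q = 2$.
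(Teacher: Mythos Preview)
Your proposal is correct and matches the paper's own treatment: the corollary is stated as the specialization $q = 2$ of Proposition~(\ref{lattice}), with a pointer to Lemma~(\ref{ev}) for the characteristic polynomial of $\eta^6$, and no further argument is given. Your breakdown of which part of Proposition~(\ref{lattice}) supplies each claim is exactly right.
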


\section{K3 surfaces with a free automorphism of positive entropy}

In this section, we shall prove Theorem (\ref{main}) (2) by 
showing the following more explicit:

\begin{theorem}\label{exsitence} There exists a projective K3 surface 
$S$ of $\rho(S) = 2$ such that 
${\rm NS}\, (S) = {\mathbf Z}h_1 \oplus {\mathbf Z}h_2$ where 
$$((h_i.h_j)) = 
\left(\begin{array}{rr}
4 & 2\\
2 & -4\\
\end{array} \right)\,\, .$$
Any such K3 surface $S$ (all of which form then a dense subset of $18$-dimensional family) admits a free automorphism $g$ of positive entropy. 
\end{theorem}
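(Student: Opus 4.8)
The plan is to realize $(N,b)$ with the matrix $\left(\begin{smallmatrix} 4 & 2 \\ 2 & -4 \end{smallmatrix}\right)$ as the Néron--Severi lattice of a K3 surface, and to realize the isometry $\eta^6$ as (coming from) an automorphism. First I would invoke the surjectivity of the period map together with the Torelli theorem: since $(N,b)$ is an even lattice of signature $(1,1)$, it embeds into the K3 lattice $\Lambda = U^{\oplus 3}\oplus E_8(-1)^{\oplus 2}$ primitively (one checks the embedding exists and may assume it is primitive; here the discriminant form computed in Corollary~\ref{desiredlattice}, namely $\mathbf Z/2\oplus\mathbf Z/10$, is what controls the gluing), and a very general point in the period domain attached to the orthogonal complement $T := N^\perp$ in $\Lambda$ gives a K3 surface $S$ with $\mathrm{NS}(S)\cong N$ via an isometry sending $h_i$ to the chosen basis; the family of such periods is $20-2 = 18$-dimensional, so the claimed dense $18$-dimensional family appears here.

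Next I would produce the automorphism by a Torelli-type lifting argument. Consider the isometry $\Phi$ of $H^2(S,\mathbf Z)$ which is $\eta^6$ on $\mathrm{NS}(S)\cong N$ and $-\mathrm{id}$ on $T(S)$; this is well defined on all of $H^2(S,\mathbf Z)$, not merely on $N\oplus T(S)$, precisely because $\eta^6$ acts as $-\mathrm{id}$ on the discriminant group $N^*/N$ (Corollary~\ref{desiredlattice}(2)) and $-\mathrm{id}$ acts as $-\mathrm{id}$ on $T(S)^*/T(S)\cong N^*/N$, so the two agree on the glue and extend. Since $\Phi$ is $-\mathrm{id}$ on $T(S)\supset H^{2,0}(S)$, it sends $\omega_S$ to $-\omega_S$, in particular it preserves the Hodge decomposition. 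One of the eigenvalues of $\eta^6$ on $N$ is $9+4\sqrt5>1$, so after possibly composing with an element of the Weyl group of $\mathrm{NS}(S)$ generated by reflections in $(-2)$-classes one arranges that $\Phi$ maps the positive cone to itself and preserves the Kähler (ample) cone; here the point that $(N,b)$ represents \emph{no} $(-2)$-class (Corollary~\ref{desiredlattice}(1)) is decisive — the positive cone has no interior walls, so the ample cone is an entire connected component of $\{x\in N_{\mathbf R}: (x,x)>0\}$ and $\Phi$, having positive spectral radius and real positive eigenvalues, automatically preserves it, with no Weyl correction needed. By the global Torelli theorem (strong form, \cite{BHPV}, VIII.(11.1)) there is then a unique $g\in\mathrm{Aut}(S)$ with $g^* = \Phi$.

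Then I would check the two required properties of $g$. Positive entropy: $d_1(g) = \eta^6 = 9+4\sqrt5 > 1$ by construction, since $g^*|H^2$ has $\eta^6$ among its eigenvalues; equivalently the characteristic polynomial of $g^*|\mathrm{NS}(S)$ is $t^2-18t+1$, a Salem-type polynomial with a root outside the unit circle. Freeness: I would run the holomorphic and topological Lefschetz computations of Section 2 in reverse. One has $H(S,g) = 1 + \mathrm{tr}(g^*|H^2(\mathcal O_S)) = 1 + (-1) = 0$ because $g^*\omega_S = -\omega_S$; and $T(S,g) = 2 + \mathrm{tr}(g^*|\mathrm{NS}(S)) + \mathrm{tr}(g^*|T(S)) = 2 + 18 + (-1)(20) = 0$. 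Vanishing of the Lefschetz numbers does not by itself give freeness, so the real work is to exclude fixed points directly: if $P\in S^g$, then $g$ acts on the tangent space $T_PS$ with determinant equal to the eigenvalue of $g^*$ on $H^0(\Omega^2_S)^\vee$, i.e. $g$ acts on $\wedge^2 T_PS$ by $-1$, so locally $g$ looks like $\mathrm{diag}(\lambda,-\lambda^{-1})$ with $\lambda$ a root of unity (the eigenvalues of a finite-order... but $g$ is not finite order) — more carefully, since $g$ has positive entropy the fixed-point analysis must use the holomorphic Lefschetz \emph{fixed point formula} with its local contributions: at an isolated fixed point $P$ with tangent eigenvalues $\lambda_1,\lambda_2$ the contribution is $1/((1-\lambda_1)(1-\lambda_2))$, and $\lambda_1\lambda_2 = -1$ forces this to be a specific nonzero real quantity, so a careful count of all fixed-point components (isolated points and fixed curves, the latter contributing via their genus and normal bundle) against $H(S,g)=0$ must be shown to be impossible. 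This Lefschetz-fixed-point bookkeeping — showing that no configuration of fixed loci is numerically consistent with $H(S,g) = 0$ given $g^*\omega_S = -\omega_S$ — is the step I expect to be the main obstacle, and it is presumably where the hypothesis that $(N,b)$ represents neither $0$ nor $\pm 2$ (ruling out $(-2)$-curves that could be fixed, and elliptic fibrations) does its remaining work; the argument of Lemma~\ref{k3} shows such a $g$ would \emph{have} to be free if it exists on $\rho=2$, so conversely one wants to confirm the $g$ we built is the free one.
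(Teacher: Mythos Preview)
Your setup through the Torelli step is essentially the paper's own argument: embed $(N,b)$ as $\mathrm{NS}(S)$, glue $\eta^6$ on $\mathrm{NS}(S)$ with $-\mathrm{id}$ on $T(S)$ via the matching action on discriminant groups, observe that the absence of $(-2)$-classes makes the ample cone a full component of the positive cone so $\Phi$ preserves it, and invoke global Torelli. So far, fine.

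The gap is in the freeness step. You correctly compute $T(S,g)=0$ and $H(S,g)=0$, but then veer off into local holomorphic Lefschetz contributions and say you expect this bookkeeping to be ``the main obstacle''. It is not; the paper disposes of it in two lines, and the ingredients you need are already on your page. First, $S^g$ contains no curve: if $g(C)=C$ then $[C]\in\mathrm{NS}(S)$ is an eigenvector of $g^*|\mathrm{NS}(S)$ with eigenvalue $1$, but the eigenvalues are $\eta^{\pm 6}$, neither of which is $1$. (This is where the eigenvalue structure does the work, not the absence of $(-2)$-curves as you suggest at the end.) Second, once $S^g$ is finite, the topological Lefschetz number \emph{is} the number of fixed points counted with multiplicity, and for a holomorphic automorphism every isolated fixed point contributes a \emph{positive} integer (complex intersection multiplicities of the graph with the diagonal are positive). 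Hence $T(S,g)=0$ forces $S^g=\emptyset$ outright. No local analysis of tangent eigenvalues or holomorphic Lefschetz contributions is needed.

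Two small side remarks: your claim that ``the argument of Lemma~\ref{k3} shows such a $g$ would have to be free if it exists on $\rho=2$'' is backwards --- that lemma assumes freeness and deduces consequences, not the converse. And the role of ``represents neither $0$ nor $\pm 2$'' is exhausted by the ample-cone step you already handled; it plays no further part in the freeness argument.
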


\begin{proof} Note that the abstract lattice given by the symmetric matrix above is an even hyperbolic lattice of rank $2$. Hence the first result follows from \cite{Mo}, Corollary (2.9), which is based on the surjectivity of the period map for K3 surfaces (see eg. \cite{BHPV}, Page 338, Theorem 14.1) and Nikulin's theory (\cite{Ni2}) of integral bilinear form. 

Let us construct an automorphism $g$ of $S$ with desired properties. 

Note that there is a lattice isomorphism 
$\varphi : {\rm NS}\, (S) \simeq (N, b)$,  where $(N, b)$ 
is the lattice in Corollary (\ref{desiredlattice}). 

Let $f := \varphi^{-1} \circ \eta^6 \circ \varphi$. Then $f$ is an isometry 
of ${\rm NS}\, (S)$. The eigenvalues of $\eta^6$ on $N$ 
are $\eta^6$ and $\eta^{-6}$ by Lemma (\ref{ev}). Thus so are the eigenvalues 
of $f$. Since both eigenvalues are positive, it follows that $f$ preserves the positive cone, i.e., the connected component of 
$$\{x \in {\rm NS}\, (S) \otimes {\mathbf R} \vert (x, x) > 0\}$$ 
that contains ample classes. Since ${\rm NS}\, (S) \simeq (N, b)$ does not represent $-2$, the ample cone of $S$ coincides with the positive cone. Thus $f$ preserves the ample cone. 

Since $\eta^6$ acts on the discriminant group $N^*/N$ as $-id_{N^*/N}$, 
the isometry $f$ also acts on the discriminant group ${\rm NS}\, (S)^*/NS(S)$ 
as $-id_{{\rm NS}\, (S)^*/{\rm NS}\, (S)}$. 

Let $T(S)$ be the transcendental lattice of $S$. Then $-id_{T(S)}$ also 
acts on the discriminant group $T(S)^*/T(S)$ as $-id_{T(S)^*/T(S)}$. 

Hence the isometry $(f, -id_{T(S)})$ of ${\rm NS}\, (S) \oplus T(S)$ extends 
to an isometry, say $\tilde{f}$, of $H^2(S, {\mathbf Z})$, by \cite{Ni2}, Proposition (1.6.1). Here we note that $H^2(S, {\mathbf Z})$ is unimodular and 
${\rm NS}\, (S)$ and $T(S) = {\rm NS}\, (S)^{\perp}$ (in $H^2(S, {\mathbf Z})$) are both primitive in $H^2(S, {\mathbf Z})$. By construction, $\tilde{f}$ preserves the Hodge decomposition of $H^2(S, {\mathbf Z})$ and, as observed above, also preserves the ample cone. Hence, by the golobal Torelli theorem for K3 surfaces (see eg. \cite{BHPV}, Chapter VIII Theorem (11.1)), there is an automorphism $g$ of $S$ such that $g^* \vert H^2(S, {\mathbf Z}) = \tilde{f}$. 

Since $f = \tilde{f} \vert {\rm NS}\,(S)$ has an eigenvalue $\eta^6 > 1$, it follows that $g$ is of positive entropy. 

Let us show that $g$ is free. Since $g \not= id_S$, the fixed point set $S^g$ consists of at most finitely many complete curves and at most 
finitely many points.

We first show that $S^g$ contains no complete curve. In fact, if $g(C) = C$ for some complete curve $C$, then the class $[C]$ would be an eigenvector of $g^* \vert {\rm NS}\, (S)$ with eigenvalue $1$. However, the eigenvalues of $g^* \vert {\rm NS}\, (S) = f$ are $\eta^6$ and $\eta^{-6}$, none of which is $1$. 

Hence $S^g$ consists of at most finitely many points, say $n$ points counted with multiplicities. Then, by the topological Lefschetz fixed point formula, we have
$$n = T(S, g) = 2 + {\rm tr}\, g^* \vert {\rm NS}\, (S) +  {\rm tr}\, g^* \vert T(S)\,\, .$$
By $g^* \vert {\rm NS}\, (S) = f$ and by 
${\rm tr}\, f = {\rm tr}\, (\eta^6 \vert N) = 18$ (by Corollary (\ref{desiredlattice}) (2)), we obtain 
$${\rm tr}\, g^* \vert {\rm NS}\, (S) = 18\,\,.$$
On the other hand, by $g^* \vert T(S) = -id_{T(S)}$ and 
${\rm rank}\, T(S) = 20$ (by $\rho(S) = 2$), we obtain
$${\rm tr}\, g^* \vert T(S) = -20\,\,.$$
Hence 
$$n = T(S, g) = 2 + 18 -20 = 0\,\, .$$
Hence $S^g = \emptyset$. This completes the proof. 
\end{proof}
\begin{remark}\label{eq} Let $(S, g)$ be as in Theorem (\ref{exsitence}). By the shape of ${\rm NS}\, (S)$ and the fact that ${\rm NS}\, (S)$ represents neither $0$ nor $\pm 2$, it follows from \cite{SD}, Theorem (6.1) that $S$ is realized as a smooth quartic surface in ${\mathbf P}^3$. It seems extremely hard but highly interesting to write down explcitly the equation of $S$ and the action 
of $g$ in terms of the global homogeneous coordinates of ${\mathbf P}^3$, for at least one of such pairs. 
\end{remark}
\begin{remark}\label{nonmimimal} Let $(S, g)$ be as in 
Theorem (\ref{exsitence}). Then 
$$T(S, g^2) = 2 + \eta^{12} + \frac{1}{\eta^{12}} + 20 > 0\,\, .$$ 
Therefore $g^2$ has a fixed point, say $P$. Put $Q = g(P)$. Then $Q \not= P$ 
(by $S^g = \emptyset$) and $g(\{P, Q\}) = \{P, Q\}$. Let $\tilde{S}$ 
be a smooth surface obtained by the blow up of $S$ at $P$ and $Q$. Then $g$ lifts to a free automorphism of $\tilde{S}$ with positive entropy. So, 
there is also a ``non-mimimal" K3 surface with a free automorphism of positive entropy.  
\end{remark}
\begin{remark}\label{high} For each integer $n \ge 2$, there is an $n$-dimensional smooth compact K\"ahler manifold wih a free automorphism of positive 
entropy. One of the ``cheapest" way to construct may be as follows. 
Let $(S, g)$ be as in 
Theorem (\ref{exsitence}) and ${\mathbf P}^{n-2}$ be the projectiv space of dimension $n-2$. Then the pair of product type
$$(S \times {\mathbf P}^{n-2}, g \times id_{{\mathbf P}^{n-2}})$$ 
satisfies the desired property. Needless to say, it would be more interesting to see if it is possible to construct free automorphisms of positive entropy ``which do not come from lower dimensional pieces" for higher dimensional manifolds.
\end{remark}

\end{document}